\newcommand{\SR}{{\mathcal R}}
\newcommand{\SU}{{\mathcal U}}
\newcommand{\di}{\operatorname{dim}}
\subjclass{Primary: 37D30; Secondary: 37C10}
\keywords{Sectional-Axiom A Flow, Morse Index, Vector Field}
\title[A dichotomy for higher dimensional flows]
      {A dichotomy for higher dimensional flows}
\author[A. Arbieto, C. A. Morales]{A. Arbieto, C. A. Morales}
\address{Instituto de Mat\'ematica, UFRJ,
P. O. Box 68530, 21945-970 Rio de Janeiro, Brazil.}
\email{arbieto@im.ufrj.br, morales@impa.br}
\thanks{Partially supported by CNPq, CAPES-Prodoc, FAPERJ and PRONEX/DS from Brazil.}
\newtheorem{theorem}{Theorem}
\newtheorem{corollary}[theorem]{Corollary}
\newtheorem*{main1}{Theorem A}
\newtheorem*{main2}{Theorem B}
\newtheorem{lemma}[theorem]{Lemma}
\newtheorem{proposition}[theorem]{Proposition}
\newtheorem{conjecture}[theorem]{Conjecture}
\newtheorem{definition}[theorem]{Definition}
\begin{document}

\begin{abstract}
We analyze the dichotomy between {\em sectional-Axiom A flows} (c.f. \cite{memo})
and flows with points accumulated by periodic orbits of different indices.
Indeed, this is proved for $C^1$ generic flows whose singularities accumulated by
periodic orbits have codimension one.
Our result improves \cite{mp1}.
\end{abstract}

\maketitle

\section{Introduction}

\noindent
Ma\~n\'e discussed
in his breakthrough work \cite{M} if
the {\em star property}, i.e., the property of being far away from systems with non-hyperbolic
periodic orbits, is sufficient to guarantee that a system be Axiom A.
Although this is true for diffeomorphisms \cite{h0} it is not
for flows as the geometric
Lorenz attractor \cite{abs}, \cite{gu}, \cite{GW} shows. On the other hand, if singularities are not allowed
then the answer turns on to be positive by \cite{gw}. Previously, Ma\~n\'e
connects the star property with
the nowadays called {\em Newhouse phenomenon} at least for surfaces. In fact, he proved that a $C^1$-generic
surface diffeomorphism either is Axiom A or displays infinitely many sinks or sources \cite{m}.
In the extension of this work on surfaces,
\cite{mp1} obtained
the following results about $C^1$-generic flows for closed 3-manifolds: Any $C^1$-generic
star flow is singular-Axiom A and, consequently,
any $C^1$-generic flow is singular-Axiom A or displays infinitely many
sinks or sources. The notion of {\em singular-Axiom A} was introduced in \cite{mpp}
inspired on the dynamical properties of both Axiom A flows and the
geometric Lorenz attractor.
It is then natural to investigate such generic phenomena in higher dimensions and the natural challenges are:
Is a $C^1$-generic star flow in a closed $n$-manifold
singular-Axiom A?
Does a $C^1$-generic vector field in a closed $n$-manifold is singular-Axiom A or has
infinitely many sinks or sources?
Unfortunately, what we know is that the second question has negative answer for $n\geq 5$
as counterexamples
can be obtained by suspending the diffeomorphisms in Theorem C of \cite{bv}
(but for $n=4$ the answer may be positive).
A new light
comes from the {\em sectional-Axiom A flows} introduced in \cite{memo}.
Indeed, the first author replaced the term singular-Axiom A by sectional-Axiom A
above in order to formulated the following conjecture
for $n\geq 3$ (improving that in p. 947 of \cite{gwz}):

\begin{conjecture}
 \label{conj0}
$C^1$-generic star flows on closed $n$-manifolds are sectional-Axiom A.
\end{conjecture}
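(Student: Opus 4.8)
\noindent
The plan is to reduce Conjecture~\ref{conj0} to a structural analysis of the chain recurrence classes of a $C^1$-generic star flow $X$ on a closed $n$-manifold $M$, and then to show each such class is sectional-hyperbolic for $X$ or for $-X$. For a $C^1$-generic flow one may assume: periodic orbits and singularities are hyperbolic and exhaust the non-wandering set (Pugh's closing lemma together with the general density theorem); the chain recurrence set splits into chain classes, each one a hyperbolic singularity or the homoclinic class $H(p)$ of a periodic orbit $p$ (the $C^1$ connecting lemma for pseudo-orbits, adapted to flows); and each chain class varies semicontinuously with the flow. If a chain class $C$ contains no singularity, then by Gan--Wen's theorem that nonsingular star flows satisfy Axiom~A, $C$ is hyperbolic, hence sectional-hyperbolic. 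So the whole difficulty concentrates on chain classes containing a singularity.

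Fix such a class $C$, with $\sigma\in C$ a hyperbolic singularity. The first step is to prove that all periodic orbits in $C$ share a common stable index $\mathrm{Ind}(C)$: two periodic orbits of different indices in one chain class would, via the connecting lemma for pseudo-orbits, produce after a small perturbation a genuine heterodimensional cycle inside a robustly chain-transitive set, and a further perturbation would create a non-hyperbolic periodic orbit, contradicting the star property (Gan's star lemma supplies the uniform estimates that make these perturbations legitimate). The second step is to constrain the singularities of $C$: small perturbations supported near $\sigma$ create periodic orbits passing close to $\sigma$ whose indices are dictated by the partial hyperbolicity of $DX_\sigma$, and the star property then forces every $\sigma\in C$ to be \emph{Lorenz-like}, i.e.\ $DX_\sigma$ admits a dominated splitting $E^{ss}_\sigma\oplus E^{c}_\sigma\oplus E^{uu}_\sigma$ with $\dim E^c_\sigma\le 2$ and indices compatible with $\mathrm{Ind}(C)$ (one extreme bundle may be trivial, which recovers the codimension-one case, but the argument must equally handle higher-codimension Lorenz-like singularities).

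The third and principal step is to promote these pointwise data to a global dominated splitting $T_CM=E\oplus F$ with $E$ uniformly contracting and $F$ sectionally expanding (after replacing $X$ by $-X$ if necessary). Over the regular points of $C$, Liao's linear Poincaré flow carries a dominated splitting $\Delta^s\oplus\Delta^u$ by a Ma\~n\'e--Liao argument combined with the uniform star estimates; the delicate matter is to transport this into a genuine $DX$-invariant dominated splitting over all of $C$, \emph{including the singularities}, where the linear Poincaré flow degenerates. The proposed device is to work with the rescaled (extended) linear Poincaré flow of Liao and Gan--Yang, which stays bounded near $\sigma$, to match it along approaching regular orbits with the splitting $E^{ss}_\sigma\oplus(E^{c}_\sigma\oplus\cdots)$ from the second step, and to take closures, domination being a closed condition. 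Sectional expansion of $F$ then follows from the star lemma: on periodic orbits the star condition yields uniform exponential expansion of the cocycle induced by $DX$ on $2$-planes of $F$, and this uniform estimate passes to the dense set of periodic orbits in $C$ and, by continuity, to $C$ itself. Finiteness of the resulting pieces and the no-cycle condition between them follow from semicontinuity of chain classes and the robustness of sectional hyperbolicity, exactly as in the Axiom~A case.

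The hardest part will be the third step near the singularities: reconciling the transverse (linear Poincaré) expansion, whose norm blows up like $1/\|X\|$ as one approaches $\sigma$, with the tangent cocycle $DX$, so as to obtain one dominated splitting that is uniform over the whole class and genuinely sectionally expanding on $F$. The sub-obstacle that makes this strictly harder than the codimension-one case is the possibility of a single chain class containing several Lorenz-like singularities of \emph{different} types (different $\dim E^{ss}$), possibly together with regular orbits, all chain-linked; one must either rule such classes out for $C^1$-generic star flows or show directly that they still admit the required $E\oplus F$. This is precisely the phenomenon organized by later work under the name \emph{multisingular hyperbolicity}, and a complete proof of Conjecture~\ref{conj0} would need a generic mechanism forcing the singular and regular expansion rates to be globally comparable.
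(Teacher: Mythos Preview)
The paper does \emph{not} prove this statement: Conjecture~\ref{conj0} is stated as an open conjecture, and the paper establishes only partial cases. Theorem~B proves it under two additional hypotheses, namely that $X$ already has a spectral decomposition and that every singularity accumulated by periodic orbits has codimension one (Morse index $1$ or $n-1$); Theorem~A gives the corresponding dichotomy. The arguments in the paper hinge on Proposition~\ref{thCcc} and Corollary~\ref{thC}, which exploit the codimension-one assumption in an essential way to force the inequalities (\ref{eq1}) or (\ref{eq11}) and hence sectional hyperbolicity.

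Your proposal is not a proof but an outline of a strategy, and you yourself identify the genuine gap at the end. The ``third and principal step'' --- transporting the dominated splitting of the linear Poincar\'e flow across the singularities and obtaining a single $DX$-invariant dominated splitting $E\oplus F$ with $F$ sectionally expanding over the whole class --- is precisely the unresolved difficulty. In particular, you have not ruled out (and the paper does not rule out) a chain class containing Lorenz-like singularities of different types, with incompatible values of $\dim E^{ss}$, for which no common sectional-hyperbolic splitting exists for either $X$ or $-X$. Invoking ``multisingular hyperbolicity'' names the obstruction but does not remove it; indeed, that framework postdates this paper and leads to a notion strictly weaker than sectional-Axiom~A in general. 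So as written your third step is a restatement of the conjecture rather than a proof, and the proposal does not go beyond what the paper already claims.
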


Analogously we can ask if a
$C^1$-generic vector field in a closed $n$-manifold is sectional-Axiom A or display
infinitely many sinks or sources.
But now the answer is
negative not only for $n=5$, by the suspension of \cite{bv} as above,
but also for $n=4$ by \cite{st} and the suspension of certain diffeomorphisms \cite{mane}.
Nevertheless, in all these counterexamples, it is possible to observe the existence of
{\em points accumulated by hyperbolic periodic orbits of different Morse indices}.
Since such a phenomenon can be observe also
in a number of well-known examples of non-hyperbolic systems and
since, in dimension three, that phenomenon implies
existence of infinitely many sinks or sources,
it is possible to formulate the following dichotomy
(which, in virtue of Proposition \ref{p1}, follows from Conjecture \ref{conj0}):

\begin{conjecture}
\label{conj1}
$C^1$-generic vector fields $X$
satisfy (only) one of the following
properties:
\begin{enumerate}
 \item
$X$ has a point accumulated by hyperbolic periodic orbits of different Morse indices;
\item
$X$ is sectional-Axiom A.
\end{enumerate}
\end{conjecture}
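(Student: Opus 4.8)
The plan is to deduce the dichotomy from Conjecture \ref{conj0} by a Baire-category argument, exactly in the spirit of Proposition \ref{p1}, and in the unconditional (codimension-one) case to supply the missing input — the corresponding instance of Conjecture \ref{conj0} — directly. Let $\mathcal{X}^1$ denote the space of $C^1$ vector fields on the ambient closed $n$-manifold, and write $\mathcal{N}$ for the set of $X\in\mathcal{X}^1$ possessing a point accumulated by hyperbolic periodic orbits of at least two different Morse indices. Since $\mathcal{N}$ is, by its very definition, contained in the first alternative, the only real issue is its complement. I would first observe that
\[
\operatorname{int}(\mathcal{N})\ \cup\ \operatorname{int}\bigl(\mathcal{X}^1\setminus\mathcal{N}\bigr)
\]
is open and dense (its complement is the topological boundary of $\mathcal{N}$, a closed set with empty interior). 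Intersecting this open dense set with the residual sets carrying the standard $C^1$-generic mechanisms (Pugh's closing lemma, Hayashi's connecting lemma, continuity of the chain recurrence classes, Kupka--Smale, and the genericity statements underlying Conjecture \ref{conj0}) produces a residual set $\mathcal{R}$, and it suffices to verify the dichotomy on $\mathcal{R}$.

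For $X\in\mathcal{R}$ there are two cases. If $X\in\operatorname{int}(\mathcal{N})$ then in particular $X\in\mathcal{N}$, so alternative (1) holds. If instead $X\in\operatorname{int}\bigl(\mathcal{X}^1\setminus\mathcal{N}\bigr)$, then $X$ is robustly free of points accumulated by periodic orbits of different indices, and the key step is to promote this to the star property. On the regular part this is a connecting-lemma argument: a non-hyperbolic periodic orbit in the limit set can be pushed into a chain recurrence class and then unfolded (saddle-node or Hopf) to manufacture, at nearby parameter values, hyperbolic periodic orbits of two consecutive indices passing arbitrarily close to a common point, contradicting robustness; genericity then upgrades this one-parameter picture to a single vector field carrying infinitely many such orbits. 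The delicate point is the behaviour near singularities: a singularity in the limit set that is accumulated by periodic orbits could a priori carry periodic orbits of varying index in every neighbourhood. This is precisely where the codimension-one hypothesis on the singularities accumulated by periodic orbits enters, since it pins down the dimensions of their invariant manifolds and excludes the index-mixing configuration; for general flows this is exactly the obstruction, which is why Conjecture \ref{conj1} remains open.

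Once $X$ is known to be a star flow, Conjecture \ref{conj0} — or, in the unconditional theorem, the codimension-one case of it, which I would establish separately by constructing a sectional-hyperbolic splitting over the chain recurrence classes from the uniform index together with the robust absence of index mixing and the codimension-one control of singularities — yields that $X$ is sectional-Axiom A, i.e.\ alternative (2). Finally, to see that the two alternatives are mutually exclusive: in a sectional-Axiom A flow the chain recurrent set is a finite disjoint union of sectional-hyperbolic basic pieces, all periodic orbits inside a single such piece share the same index, and distinct pieces are topologically separated; hence no point can be accumulated by hyperbolic periodic orbits of two different indices, so (1) and (2) cannot hold simultaneously.

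I expect the main obstacle to be the singular case of the implication \emph{robustly no index mixing $\Rightarrow$ star flow}, and, behind it, the proof of Conjecture \ref{conj0} itself: controlling how periodic orbits of possibly different indices may approach a singularity is what forces the codimension-one restriction in our unconditional statement and what blocks the general conjecture.
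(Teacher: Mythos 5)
There is a genuine gap, and it sits exactly where the work is. First, a technical point: your opening Baire step is unjustified. For an arbitrary set $\mathcal{N}\subset\mathcal{X}^1$ the union $\operatorname{int}(\mathcal{N})\cup\operatorname{int}(\mathcal{X}^1\setminus\mathcal{N})$ need not be dense --- the boundary of an arbitrary set can have nonempty interior --- and $\mathcal{N}$ as you define it is neither open nor closed. The way this is actually handled (as in Proposition \ref{p1}) is through the generic lower semicontinuity of the maps $X\mapsto Cl(Per_i(X))$: for a generic $X$ with $Cl(Per_i(X))\cap Cl(Per_j(X))=\emptyset$ for $i\neq j$, one gets a $C^1$-neighborhood $\mathcal{U}$ of $X$ and pairwise disjoint open sets $U_i$ with $Cl(Per_i(Y))\subset U_i$ for all $Y\in\mathcal{U}$; that is the robust separation you need, and it does not come for free from general topology. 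Second, you misplace the role of the codimension-one hypothesis: the promotion ``robustly no index mixing $\Rightarrow$ star flow'' does not require it at all, because by Kupka--Smale a generic $X$ has only hyperbolic singularities, so any non-hyperbolic closed orbit of a nearby field can be taken periodic and then unfolded to produce periodic orbits of two indices inside intersecting $U_i$'s; this part works in every dimension $n\geq 3$ with no restriction on singularities.

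The real issue is the step you defer with ``which I would establish separately by constructing a sectional-hyperbolic splitting over the chain recurrence classes.'' That is not a proof sketch; it is a restatement of (the relevant case of) Conjecture \ref{conj0}, and it is precisely the content that the unconditional theorem must supply. The argument that actually closes this gap in the codimension-one case does not go through Conjecture \ref{conj0} at all: one shows that each codimension-one singularity $\sigma$ accumulated by periodic orbits has $Cl(W^u(\sigma))$ transitive, Lyapunov stable (generically, for one of $\pm X$) and strongly homogeneous of some index $1\leq i_0\leq n-2$ (finiteness of sinks/sources for star flows); then the saddle-value index arithmetic for strongly homogeneous nontrivial transitive sets (Lemma \ref{43}, Proposition \ref{thCcc}) forces one of the index inequalities, the extension of the Gan--Li--Wen/Gan--Wen--Zhu/Metzger--Morales results to nontrivial transitive sets gives sectional hyperbolicity up to flow-reversing (Corollary \ref{thCc}), the strong stable manifold obstruction rules out the reversed case (Corollary \ref{thC}), and Propositions \ref{c1} and \ref{thD} upgrade $Cl(W^u(\sigma))$ to a sectional-hyperbolic attractor; finally the Gan--Wen theorem for nonsingular star flows enters via Proposition \ref{star=>sec-axa} to conclude sectional-Axiom A. None of this machinery --- the saddle-value dichotomy, the Lyapunov stability genericity, the attractor structure, or the use of the preperiodic set in place of robust transitivity --- appears in your proposal, so as written it establishes neither the general conjecture (which indeed remains open) nor the codimension-one case that constitutes the theorem.
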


In this paper we prove Conjecture \ref{conj1} but in a case very close to the
three-dimensional one, namely,
when the {\em singularities accumulated by
periodic orbits have codimension one} (i.e. Morse index $1$ or $n-1$).
Observe that our result
implies the dichotomy in \cite{mp1}
since the assumption about the singularities is automatic for $n=3$.
It also implies Conjecture \ref{conj1} in large classes of vector fields as, for instance,
those whose singularities (if any) have codimension one.
As an application we prove Conjecture \ref{conj0} for star flows with spectral decomposition
as soon as the singularities accumulated by periodic orbits have codimension one.
Let us state our results in a precise way.

In what follows $M$ is a compact connected boundaryless Riemannian manifold of dimension $n\geq 3$
(a {\em closed $n$-manifold} for short).
If $X$ is a $C^1$ vector field in $M$ we will
denote by $X_t$ the flow generated by $X$ in $M$.
A subset $\Lambda\subset M$ is
{\em invariant} if
$X_t(\Lambda)=\Lambda$ for all $t\in I \!\! R$.
By a {\em closed orbit} we mean a periodic orbit or a singularity.
We define the {\em omega-limit set} of $p\in M$ by
$$
\omega(p)=\left\{x\in M:
x=\lim_{n\to\infty}X_{t_n}(p)
\mbox{ for some sequence }t_n\to\infty\right\}
$$
and call $\Lambda$
{\em transitive} if
$\Lambda=\omega(p)$ for some $p\in \Lambda$.
Clearly every transitive set is compact invariant.
As customary we call $\Lambda$ {\em nontrivial} it it does not reduce to a single orbit.

Denote by $\|\cdot\|$ and $m(\cdot)$ the norm and the minimal norm
induced by the Riemannian metric and by
$Det(\cdot)$ the jacobian operation.
A compact invariant set $\Lambda$ is {\em hyperbolic}
if there are a continuous invariant tangent bundle decomposition
$$
T_\Lambda M=\hat{E}^s_\Lambda\oplus E^X_\Lambda\oplus \hat{E}^u_\Lambda
$$
and positive constants $K,\lambda$
such that $E^X_\Lambda$ is the subbundle generated by $X$,
$$
\|DX_t(x)/\hat{E}^s_x\|\leq Ke^{-\lambda t}
\quad \mbox{ and }\quad m(DX_t(x)/\hat{E}^u_x)\geq K^{-1}e^{\lambda t},
$$
for all $x\in \Lambda$ and $t\geq 0$.
Sometimes we write $\hat{E}^{s,X}_x$, $\hat{E}^{u,X}_x$ to indicate dependence on $X$.

A closed orbit $O$ is hyperbolic if it does as a compact invariant set. In such a case
we define its {\em Morse index} $I(O)=dim(\hat{E}^s_O)$, where $dim(\cdot)$
stands for the dimension operation.
If $O$ reduces to a singularity $\sigma$, then we write
$I(\sigma)$ instead of $I(\{\sigma\})$ and say
that $\sigma$ has {\em codimension one} if $I(\sigma)=1$ or $I(\sigma)=n-1$.
It is customary to call hyperbolic closed orbit of maximal (resp. minimal) Morse index
{\em sink} (resp. {\em source}).

On the other hand, an invariant splitting
$T_\Lambda M=E_\Lambda\oplus F_\Lambda$
over $\Lambda$ is {\em dominated}
(we also say that $E_\Lambda$ {\em dominates} $F_\Lambda$) if there are positive constants
$K,\lambda$ such that
$$
\frac{\|DX_t(x)/E_x\|}{m(DX_t(x)/F_x)}\leq Ke^{-\lambda t},
\quad\quad\forall x\in \Lambda \mbox{ and }t\geq 0.
$$

In this work we agree to call a compact invariant set $\Lambda$
{\em partially hyperbolic} if there is a dominated splitting $T_\Lambda M=E^s_\Lambda\oplus E^c_\Lambda$
with {\em contracting} dominating subbundle $E^s_\Lambda$,
namely,
$$
\|DX_t(x)/E^s_x\|\leq Ke^{-\lambda t},
\quad\quad\forall x\in \Lambda \mbox{ and }t\geq 0.
$$
We stress however that this is not a standard usage
(specially due to the lack of symmetry in this definition).
Anyway, in such a case, we say that $\Lambda$ has {\em contracting dimension $d$}
if $dim(E^s_x)=d$ for all $x\in \Lambda$.
Moreover, we say that the central subbundle $E^c_\Lambda$ is {\em sectionally expanding} if
$$
dim(E^c_x)\geq 2 \quad\mbox{ and }\quad
|Det(DX_t(x)/L_x)|\geq K^{-1}e^{\lambda t},
\quad\quad\forall x\in \Lambda \mbox{ and }t\geq 0
$$
and all two-dimensional subspace
$L_x$ of $E^c_x$.

A {\em sectional-hyperbolic set} is a partially hyperbolic set whose singularities (if any) are hyperbolic
and whose central subbundle is sectionally expanding
(\footnote{Some authors use the term {\em singular-hyperbolic} instead.}).

Now we recall the concept of sectional-Axiom A flow \cite{memo}.
Call a point $p\in M$ {\em nonwandering} if for every neighborhood
$U$ of $p$ and every $T>0$ there is
$t>T$ such that $X_t(U)\cap U\neq\emptyset$.
We denote by $\Omega(X)$ the set of nonwandering points of $X$ (which
is clearly a compact invariant set).
We say that $X$ is an {\em Axiom A flow} if $\Omega(X)$ is both hyperbolic
and the closure of the closed orbits.
The so-called
{\em Spectral Decomposition Theorem} \cite{hk}
asserts that
the nonwandering set of an Axiom A flow $X$
splits into finitely many disjoint
transitive sets {\em with dense closed orbits} (i.e. with a dense subset of closed orbits)
which are hyperbolic for $X$.
This motivates the following definition:

\begin{definition}
A $C^1$ vector field $X$ in $M$ is called
{\em sectional-Axiom A flow} if there is a finite disjoint decomposition
$
\Omega(X)=\Omega_1\cup \cdots \cup \Omega_k
$
formed by transitive sets with dense periodic orbits
$\Omega_1,\cdots, \Omega_k$ such that, for all $1\leq i\leq k$,
$\Omega_i$ is either a hyperbolic set for $X$ or a sectional-hyperbolic set for $X$ or a sectional-hyperbolic
set for $-X$.
\end{definition}

Let $\mathcal{X}^1$ denote the space of $C^1$ vector fields $X$ in $M$.
Notice that it is a Baire space if equipped with the standard $C^1$ topology.
The expression {\em $C^1$-generic vector field} will mean a vector field in a
certain residual subset of $\mathcal{X}^1$.
We say that a point is {\em accumulated by periodic orbits},
if it lies in the closure of the union of the periodic orbits, and
{\em accumulated by hyperbolic periodic orbits of different Morse index}
if it lies simultaneously in the closure of the hyperbolic periodic orbits of Morse index
$i$ and $j$ with $i\neq j$.
With these definitions we can state our main result settling a special case of Conjecture \ref{conj1}.

\begin{main1}
A $C^1$-generic vector field $X$ for which
the singularities accumulated by periodic orbits
have codimension one satisfies (only) one of the following
properties:
\begin{enumerate}
 \item
$X$ has a point accumulated by hyperbolic periodic orbits of different Morse indices;
\item
$X$ is sectional-Axiom A.
\end{enumerate}
\end{main1}

Standard $C^1$-generic results \cite{cmp} imply
that the sectional-Axiom A flows in the second alternative above
also satisfy the no-cycle condition.

The proof of our result follows that of Theorem A in \cite{mp1}.
However, we need a more direct approach bypassing Conjecture \ref{conj0}.
Indeed, we shall use some methods in \cite{mp1}
together with a combination of results \cite{glw}, \cite{gwz}, \cite{memo} for nontrivial transitive sets
(originally proved for robustly transitive sets).

\begin{definition}[\cite{a}]
We say that $X$ has
{\em spectral decomposition} if there is a finite partition $\Omega(X)=\Lambda_1\cup\cdots\cup\Lambda_l$ formed
by transitive sets $\Lambda_1,\cdots, \Lambda_l$ .
\end{definition}

Theorem A will imply the following approach to Conjecture \ref{conj0}.

\begin{main2}
\label{the-coro}
A $C^1$-generic star flow with spectral decomposition and
for which the singularities accumulated by periodic orbits have codimension one
is sectional-Axiom A.
\end{main2}

\section{Proofs}
\label{sec2}

\noindent
Hereafter we fix a closed $n$-manifold $M$, $n\geq 3$,
$X\in \mathcal{X}^1$ and a
compact invariant set $\Lambda$ of $X$.
Denote by $Sing(X,\Lambda)$ the set of singularities of $X$ in $\Lambda$.
We shall use the following concept from \cite{glw}.

\begin{definition}
We say that $\Lambda$
has a definite index $0\leq Ind(\Lambda)\leq n-1$ if there are
a neighborhood $\mathcal{U}$ of $X$ in $\mathcal{X}^1$ and
a neighborhood $U$ of $\Lambda$ in $M$ such that
$I(O)=Ind(\Lambda)$ for
every hyperbolic periodic orbit $O\subset U$ of every vector field $Y\in \mathcal{U}$.
In such a case we say that $\Lambda$ is {\em strongly homogeneous (of index $Ind(\Lambda)$)}.
\end{definition}

It turns out that the strongly homogeneous property imposes certain constraints on the Morse indices of
the singularities \cite{gwz}. To explain this we use the concept of
{\em saddle value} of a hyperbolic singularity $\sigma$ of $X$ defined by
$$
\Delta(\sigma)=Re(\lambda)+Re(\gamma)
$$
where $\lambda$ (resp. $\gamma$) is the stable (resp. unstable) eigenvalue
with maximal (resp. minimal) real part
(c.f. \cite{sstc} p. 725).
Indeed, based on the Hayashi's connecting lemma \cite{h} and well-known results about
unfolding of homoclinic loops \cite{sstc}, Lemma 4.3 in \cite{gwz} proves that,
if $\Lambda$ is a robustly transitive set which is strongly
homogeneous with hyperbolic singularities,
then $\Delta(\sigma)\neq 0$
and, furthermore, $I(\sigma)=Ind(\Lambda)$ or $Ind(\Lambda)+1$ depending on whether
$\Delta(\sigma)<0$ or $\Delta(\sigma)>0$, $\forall \sigma\in Sing(X,\Lambda)$.
However, we can observe that the same is true for
nontrivial transitive sets (instead of robustly transitive sets) for the proof in
\cite{gwz} uses the connecting lemma only once.
In this way we obtain the following lemma.

\begin{lemma}
\label{43}
Let $\Lambda$ be a nontrivial transitive set which is strongly
homogeneous with singularities (all hyperbolic) of $X$.
Then, every $\sigma\in Sing(X,\Lambda)$ satisfies $\Delta(\sigma)\neq 0$
and one of the properties below:
\begin{itemize}
\item
If $\Delta(\sigma)<0$, then $I(\sigma)=Ind(\Lambda)$.
\item
If $\Delta(\sigma)>0$, then $I(\sigma)=Ind(\Lambda)+1$.
\end{itemize}
\end{lemma}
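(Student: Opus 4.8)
The plan is to follow the proof of Lemma 4.3 in \cite{gwz} almost verbatim, the whole point being that the robust transitivity hypothesis there enters only to produce a homoclinic loop at the singularity via Hayashi's connecting lemma, and that this single step goes through unchanged for a nontrivial transitive set. So fix $\sigma\in Sing(X,\Lambda)$, which is hyperbolic by hypothesis, and let $\mathcal{U}\subset\mathcal{X}^1$ and $U\subset M$ be neighborhoods of $X$ and $\Lambda$ witnessing strong homogeneity, so that every hyperbolic periodic orbit $O\subset U$ of every $Y\in\mathcal{U}$ has $I(O)=Ind(\Lambda)$; shrinking $\mathcal{U}$ if needed, we may also assume the continuation $\sigma_Y$ of $\sigma$ is well defined and hyperbolic, with the same stable/unstable dimensions, for all $Y\in\mathcal{U}$.

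First I would build the homoclinic loop. Since $\Lambda$ is transitive there is $q\in\Lambda$ with $\omega(q)=\Lambda\ni\sigma$, hence the positive $X$-orbit of $q$ returns to any prescribed neighborhood of $\sigma$ infinitely often. Two sufficiently deep consecutive returns furnish a point close to $W^u_{loc}(\sigma)$ whose forward $X$-orbit later comes close to $W^s_{loc}(\sigma)$ while staying inside $U$; this is exactly the configuration to which Hayashi's connecting lemma \cite{h} applies, and it yields $Y\in\mathcal{U}$, coinciding with $X$ near $\sigma$, for which $W^u(\sigma_Y)$ and $W^s(\sigma_Y)$ meet off $\sigma_Y$ along an orbit $\Gamma\subset U$, i.e.\ $\sigma_Y$ has a homoclinic loop contained in $U$. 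This is the only use of the connecting lemma and the only place where the (non-robust) transitivity of $\Lambda$ is invoked; in particular $\sigma_Y$ has the same eigenvalues as $\sigma$, so $\Delta(\sigma_Y)=\Delta(\sigma)$ and $I(\sigma_Y)=I(\sigma)$.

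Next I would unfold $\Gamma$ using the classical theory of homoclinic-loop bifurcations \cite{sstc}. If $\Delta(\sigma)<0$ the local contraction along $\Gamma$ dominates and an arbitrarily $C^1$-small further perturbation inside $\mathcal{U}$ breaks $\Gamma$ into a hyperbolic periodic orbit in $U$ of Morse index $I(\sigma)$; if $\Delta(\sigma)>0$ the expansion dominates and one gets such an orbit of index $I(\sigma)-1$; and if $\Delta(\sigma)=0$ one can, in a two-parameter perturbation (moving the saddle value and splitting the loop simultaneously), obtain hyperbolic periodic orbits in $U$ of both indices $I(\sigma)$ and $I(\sigma)-1$ for vector fields in $\mathcal{U}$. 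Strong homogeneity rules the last possibility out, whence $\Delta(\sigma)\neq0$; and comparing the index of the bifurcating orbit with $Ind(\Lambda)$ gives $I(\sigma)=Ind(\Lambda)$ when $\Delta(\sigma)<0$ and $I(\sigma)-1=Ind(\Lambda)$, i.e.\ $I(\sigma)=Ind(\Lambda)+1$, when $\Delta(\sigma)>0$, as claimed.

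The main obstacle I expect is the first step: one must verify carefully that the returns of the orbit of $q$ really produce the pseudo-orbit datum that Hayashi's connecting lemma consumes, and that the perturbation can be localized so that it lies in $\mathcal{U}$, the resulting loop $\Gamma$ lies in $U$, and $\sigma$ itself is left untouched (so that it stays hyperbolic with unchanged $\Delta(\sigma)$ and $I(\sigma)$). Once $\Gamma$ is in hand the index count — including the degenerate case $\Delta(\sigma)=0$ — is a direct citation of \cite{sstc}, exactly as carried out in \cite{gwz}.
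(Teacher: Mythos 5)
Your proposal is correct and takes essentially the same approach as the paper: the paper proves Lemma \ref{43} precisely by observing that the argument of Lemma 4.3 in \cite{gwz} — one application of Hayashi's connecting lemma \cite{h} to produce a homoclinic loop at $\sigma$, followed by the index count from unfolding the loop as in \cite{sstc}, with strong homogeneity excluding $\Delta(\sigma)=0$ — uses the connecting lemma only once and hence works for nontrivial transitive sets in place of robustly transitive ones. Your write-up simply supplies the details of that observation.
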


On the other hand, the following inequalities for strongly homogeneous sets $\Lambda$
where introduced in \cite{glw}:
\begin{equation}
\label{eq1}
I(\sigma)>Ind(\Lambda),
\quad\quad\forall \sigma\in Sing(X,\Lambda).
\end{equation}
\begin{equation}
 \label{eq11}
I(\sigma)\leq Ind(\Lambda),
\quad\quad\forall \sigma\in Sing(X,\Lambda).
\end{equation}

We shall use the above lemma to present a special case where one of these
inequalities can be proved.

\begin{proposition}
\label{thCcc}
Let $\Lambda$ be a nontrivial transitive set which is strongly
homogeneous with singularities (all hyperbolic of codimension one) of $X$.
If $n\geq 4$ and $1\leq Ind(\Lambda)\leq n-2$, then $\Lambda$
satisfies either (\ref{eq1}) or (\ref{eq11}).
\end{proposition}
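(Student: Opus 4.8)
The plan is to use Lemma \ref{43} to reduce the question to the \emph{sign of the saddle value} of each singularity, and then to rule out the ``mixed sign'' situation by exploiting the codimension-one hypothesis together with domination. Concretely, suppose for contradiction that neither (\ref{eq1}) nor (\ref{eq11}) holds. Then there are $\sigma_1,\sigma_2\in Sing(X,\Lambda)$ with $I(\sigma_1)\leq Ind(\Lambda)$ and $I(\sigma_2)>Ind(\Lambda)$. By Lemma \ref{43}, since $\Delta(\sigma_i)\neq 0$, the only possibilities are $I(\sigma)=Ind(\Lambda)$ (when $\Delta(\sigma)<0$) or $I(\sigma)=Ind(\Lambda)+1$ (when $\Delta(\sigma)>0$); so in fact $I(\sigma_1)=Ind(\Lambda)$, $I(\sigma_2)=Ind(\Lambda)+1$, $\Delta(\sigma_1)<0$ and $\Delta(\sigma_2)>0$. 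The point is to show these two equalities cannot hold simultaneously under the codimension-one constraint on the singularities.

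First I would extract what codimension one means here: each $\sigma\in Sing(X,\Lambda)$ has $I(\sigma)\in\{1,n-1\}$. Combined with the conclusion of Lemma \ref{43}, $I(\sigma)\in\{Ind(\Lambda),Ind(\Lambda)+1\}$, so for every singularity $\{Ind(\Lambda),Ind(\Lambda)+1\}\cap\{1,n-1\}\neq\emptyset$. Since we are assuming $1\leq Ind(\Lambda)\leq n-2$, we have $2\leq Ind(\Lambda)+1\leq n-1$, so $Ind(\Lambda)+1$ can equal $n-1$ only if $Ind(\Lambda)=n-2$, and $Ind(\Lambda)$ can equal $1$ only if $Ind(\Lambda)=1$. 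If $\sigma_1$ has $I(\sigma_1)=Ind(\Lambda)\in\{1,n-1\}$: the value $n-1$ is excluded because $Ind(\Lambda)\leq n-2$, so $Ind(\Lambda)=1$. If $\sigma_2$ has $I(\sigma_2)=Ind(\Lambda)+1\in\{1,n-1\}$: the value $1$ is excluded because $Ind(\Lambda)+1\geq 2$, so $Ind(\Lambda)+1=n-1$, i.e. $Ind(\Lambda)=n-2$. Hence the mixed case forces $Ind(\Lambda)=1$ \emph{and} $Ind(\Lambda)=n-2$ simultaneously, which gives $n=3$, contradicting $n\geq 4$. This is the core of the argument.

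To make this rigorous I need to be careful that the dichotomy ``$I(\sigma_1)\leq Ind(\Lambda)$ for some $\sigma_1$ \emph{and} $I(\sigma_2)>Ind(\Lambda)$ for some $\sigma_2$'' is exactly the negation of ``(\ref{eq1}) or (\ref{eq11})'': (\ref{eq1}) says $I(\sigma)>Ind(\Lambda)$ for \emph{all} $\sigma$, so its failure gives a $\sigma_1$ with $I(\sigma_1)\leq Ind(\Lambda)$; (\ref{eq11}) says $I(\sigma)\leq Ind(\Lambda)$ for \emph{all} $\sigma$, so its failure gives a $\sigma_2$ with $I(\sigma_2)>Ind(\Lambda)$. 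Then apply Lemma \ref{43} to $\sigma_1$ (which must have $\Delta(\sigma_1)<0$, since $\Delta(\sigma_1)>0$ would give $I(\sigma_1)=Ind(\Lambda)+1>Ind(\Lambda)$, absurd) to get $I(\sigma_1)=Ind(\Lambda)$, and to $\sigma_2$ (which must have $\Delta(\sigma_2)>0$) to get $I(\sigma_2)=Ind(\Lambda)+1$. The codimension-one hypothesis then pins $I(\sigma_1)$ and $I(\sigma_2)$ into $\{1,n-1\}$ and the numerical squeeze above closes the argument.

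The main obstacle I anticipate is purely bookkeeping: making sure every inequality is strict or non-strict in the right direction so that the ``squeeze'' $Ind(\Lambda)=1$ and $Ind(\Lambda)+1=n-1$ genuinely forces $n=3$ with no off-by-one slippage, and double-checking the boundary values $Ind(\Lambda)=1$ and $Ind(\Lambda)=n-2$ are actually \emph{allowed} by the hypothesis $1\leq Ind(\Lambda)\leq n-2$ (they are, which is why one cannot weaken the range). No deep dynamics beyond Lemma \ref{43} is needed; the proposition is essentially a combinatorial consequence of Lemma \ref{43} plus the codimension-one restriction and $n\geq 4$.
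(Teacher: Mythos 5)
Your proof is correct and follows essentially the same route as the paper: negate both inequalities, use Lemma \ref{43} to force $I(\sigma_1)=Ind(\Lambda)$ and $I(\sigma_2)=Ind(\Lambda)+1$, then use the codimension-one hypothesis and $1\leq Ind(\Lambda)\leq n-2$ to squeeze $Ind(\Lambda)=1$ and $Ind(\Lambda)=n-2$, contradicting $n\geq 4$. The only difference is cosmetic (you apply Lemma \ref{43} before pinning the indices to $\{1,n-1\}$, the paper does it in the opposite order), so nothing further is needed.
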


\begin{proof}
Otherwise
there are $\sigma_0,\sigma_1\in Sing(X,\Lambda)$
satisfying
$I(\sigma_0)\leq Ind(\Lambda)<I(\sigma_1)$.
Since both $\sigma_0$ and $\sigma_1$ have codimension one and $1\leq Ind(\Lambda)\leq n-2$
we obtain $I(\sigma_0)=1$ and $I(\sigma_1)=n-1$.
If $\Delta(\sigma_0)\geq 0$ then $I(\sigma_0)=Ind(\Lambda)+1$
by Lemma \ref{43} so $Ind(\Lambda)=0$ which contradicts $1\leq Ind(\Lambda)$.
Then $\Delta(\sigma_0)<0$ and so $Ind(\Lambda)=I(\sigma_0)=1$ by Lemma \ref{43}.
On the other hand, if
$\Delta(\sigma_1)<0$ then
$Ind(\Lambda)=I(\sigma_1)=n-1$ by Lemma \ref{43}.
As $Ind(\Lambda)=1$ we get $n=2$ contradicting $n\geq 4$.
Then $\Delta(\sigma_1)\geq0$ so $I(\sigma_1)=Ind(\Lambda)+1$ by Lemma \ref{43}
thus $n=3$ contradicting $n\geq 4$.
The proof follows.
\end{proof}

The importance of (\ref{eq1}) and (\ref{eq11}) relies on the
the following result proved in \cite{glw}, \cite{gwz}, \cite{memo}:
A $C^1$ robustly transitive set $\Lambda$ with singularities
(all hyperbolic) which is strongly homogeneous satisfying (\ref{eq1}) (resp. (\ref{eq11}))
is sectional hyperbolic for $X$ (resp. $-X$).
However, we can observe that the same is true for
nontrivial transitive sets (instead of robustly transitive sets) as soon as $1\leq Ind(\Lambda)\leq n-2$.
The proof is similar to that in \cite{glw},\cite{gwz}, \cite{memo}
but using the so-called {\em preperiodic set} \cite{w}
instead of the natural continuation of a robustly transitive sets.
Combining this with Proposition \ref{thCc}
we obtain the following corollary in which the expression
{\em up to flow-reversing} means either for $X$ or $-X$.

\begin{corollary}
\label{thCc}
Let $\Lambda$ be a nontrivial transitive set which is strongly
homogeneous with singularities (all hyperbolic of codimension one) of $X$.
If $n\geq 4$ and $1\leq Ind(\Lambda)\leq n-2$, then $\Lambda$
is sectional-hyperbolic up to flow-reversing.
\end{corollary}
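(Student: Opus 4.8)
The plan is to assemble Corollary \ref{thCc} directly from the two ingredients highlighted just above it, together with Proposition \ref{thCcc}. First I would invoke Proposition \ref{thCcc}: under the hypotheses $n\geq 4$ and $1\leq Ind(\Lambda)\leq n-2$, the nontrivial transitive strongly homogeneous set $\Lambda$ (whose singularities are all hyperbolic of codimension one) satisfies one of the inequalities (\ref{eq1}) or (\ref{eq11}). This splits the argument into two symmetric cases according to which inequality holds.

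Next I would apply the extended version of the structure theorem from \cite{glw}, \cite{gwz}, \cite{memo}, as stated in the paragraph preceding the corollary: a nontrivial transitive set with all singularities hyperbolic which is strongly homogeneous and satisfies (\ref{eq1}) (resp. (\ref{eq11})) is sectional-hyperbolic for $X$ (resp. for $-X$), provided $1\leq Ind(\Lambda)\leq n-2$. Since that range condition is exactly part of our hypothesis, each of the two cases yields that $\Lambda$ is sectional-hyperbolic for $X$ or for $-X$. By the definition of \emph{up to flow-reversing}, this is precisely the desired conclusion, and the proof is complete.

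The point I would flag as the real content is not the corollary itself — which is a short combination — but the assertion in the intervening paragraph that the robustly-transitive structure theorem of \cite{glw}, \cite{gwz}, \cite{memo} carries over to merely nontrivial transitive sets once $1\leq Ind(\Lambda)\leq n-2$. The excerpt indicates this is achieved by substituting the preperiodic set of \cite{w} for the natural continuation of a robustly transitive set. Thus, strictly speaking, the main obstacle lives in verifying that replacement works: one must check that the preperiodic set supports the same index-homogeneity and domination estimates that the original proofs extracted from robust transitivity, and that the sectional-expansion conclusion still propagates to the whole of $\Lambda$. Granting that (as the excerpt allows us to), the corollary follows immediately by the case analysis above. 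One should also note why the endpoint indices $Ind(\Lambda)=0$ and $Ind(\Lambda)=n-1$ are excluded: these correspond to $\Lambda$ being (up to flow-reversing) an attracting or repelling hyperbolic-like situation where the relevant dominated splitting may degenerate, which is why the hypothesis $1\leq Ind(\Lambda)\leq n-2$ is imposed throughout and inherited here.
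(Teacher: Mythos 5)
Your proof is correct and takes essentially the same route as the paper: the corollary is obtained precisely by combining Proposition \ref{thCcc} with the extension (via the preperiodic set of \cite{w}) of the structure theorem of \cite{glw}, \cite{gwz}, \cite{memo} from robustly transitive to nontrivial transitive sets, as stated in the paragraph preceding the corollary. You also rightly identify that the substantive content lies in that extension, which the paper likewise asserts by analogy with the cited proofs rather than reproving.
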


A direct application of this corollary is as follows.
We say that $\Lambda$
is {\em Lyapunov stable} for $X$
if for every neighborhood $U$ of it there is a
neighborhood $W\subset U$ of it such that $X_t(p)\in U$ for every $t\ge0$ and $p\in W$.

It was proved in Theorem C of \cite{mp1} that,
for $C^1$ generic three-dimensional star flows, every nontrivial Lyapunov stable set
with singularities is singular-hyperbolic.
We will need a similar result
for higher dimensional flows, but with the term singular-hyperbolic
replaced by sectional-hyperbolic.
The following will supply such a result.

\begin{corollary}
 \label{thC}
Let $\Lambda$ be a nontrivial transitive set which is strongly
homogeneous with singularities (all hyperbolic of codimension one) of $X$.
If $n\geq 4$, $1\leq Ind(\Lambda)\leq n-2$ and $\Lambda$ is Lyapunov stable, then $\Lambda$
is sectional-hyperbolic for $X$.
\end{corollary}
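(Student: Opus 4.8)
The plan is to feed $\Lambda$ into Proposition \ref{thCcc} and then use Lyapunov stability to eliminate one of the two resulting alternatives. Since $\Lambda$ contains a singularity, fix $\sigma_0\in Sing(X,\Lambda)$; by hypothesis $\sigma_0$ is hyperbolic of codimension one. By Proposition \ref{thCcc}, $\Lambda$ satisfies either (\ref{eq1}) or (\ref{eq11}). If it satisfies (\ref{eq1}), then by the result recalled right before Corollary \ref{thCc} (applicable to our nontrivial transitive $\Lambda$ precisely because $1\le Ind(\Lambda)\le n-2$) the set $\Lambda$ is sectional-hyperbolic for $X$, which is the conclusion we want. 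So the real content is to show that (\ref{eq11}) is incompatible with $\Lambda$ being Lyapunov stable for $X$.

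Assume then, towards a contradiction, that $\Lambda$ satisfies (\ref{eq11}), i.e. $I(\sigma)\le Ind(\Lambda)$ for every $\sigma\in Sing(X,\Lambda)$; by the same recalled result $\Lambda$ is then sectional-hyperbolic for $-X$, with splitting $T_\Lambda M=E^s_{-X}\oplus E^c_{-X}$ and some contracting dimension $d$. First I would pin down the index of $\sigma_0$: since $I(\sigma_0)\le Ind(\Lambda)\le n-2<n-1$ and $\sigma_0$ has codimension one, necessarily $I(\sigma_0)=1$; hence $V:=E^u(\sigma_0,X)$, the unstable subspace of $\sigma_0$ for $X$ (equivalently the stable subspace for $-X$), has dimension $n-1\ge 3$. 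Next I would observe that $E^c_{-X}(\sigma_0)$ cannot contain a $2$-plane inside $V$: all eigenvalues of the linearization of $-X$ at $\sigma_0$ restricted to $V$ have negative real part, so $-X$ contracts area on every $2$-plane of $V$, contradicting sectional expansion of $E^c_{-X}$ for $-X$. Therefore $\dim(V\cap E^c_{-X}(\sigma_0))\le 1$, and as $E^s_{-X}(\sigma_0)$ is complementary to $E^c_{-X}(\sigma_0)$ in $T_{\sigma_0}M$ we get $d=\dim E^s_{-X}(\sigma_0)\ge \dim V-1=n-2\ge 2$ (indeed $d=n-2$, since $\dim E^c_{-X}\ge 2$). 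In particular the strong stable manifold $W^{ss}(\sigma_0,-X)$ of $\sigma_0$ for $-X$, tangent at $\sigma_0$ to $E^s_{-X}(\sigma_0)$, has positive dimension.

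To finish I would confront two invariant sets. On one hand $W^{ss}(\sigma_0,-X)\subset W^s(\sigma_0,-X)=W^u(\sigma_0,X)$, and, $\Lambda$ being Lyapunov stable for $X$ with $\sigma_0\in\Lambda$, any point whose negative $X$-orbit converges to $\sigma_0$ lies in $\Lambda$ (apply Lyapunov stability to a far-enough-back iterate, whose whole forward orbit then stays near $\Lambda$, and use that $\Lambda$ is closed); hence $W^{ss}(\sigma_0,-X)\subset\Lambda$. On the other hand, a sectional-hyperbolic set for $-X$ intersects the strong stable manifold of any of its singularities only at that singularity --- the standard ``singular cross-section'' property, c.f. \cite{memo} --- so $\Lambda\cap W^{ss}(\sigma_0,-X)=\{\sigma_0\}$. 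The two statements force $W^{ss}(\sigma_0,-X)=\{\sigma_0\}$, contradicting its positive dimension. Thus (\ref{eq11}) cannot hold; so (\ref{eq1}) does, and $\Lambda$ is sectional-hyperbolic for $X$.

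The algebraic bookkeeping (forcing $I(\sigma_0)=1$ and computing $d$) is routine. I expect the load-bearing ingredients --- and hence the steps to state carefully --- to be the two imported facts used at the end: that Lyapunov stability for $X$ swallows $W^u(\sigma_0,X)$ into $\Lambda$, and that a sectional-hyperbolic set cannot contain the strong stable manifold of its singularity. One should also double-check that the nontrivial-transitive version of ``(\ref{eq11}) $\Rightarrow$ sectional-hyperbolic for $-X$'' is indeed available here (it is, since $1\le Ind(\Lambda)\le n-2$) and that trapping the a priori local strong stable manifold in the invariant set $\Lambda$ yields its global counterpart.
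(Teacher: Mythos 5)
Your proof is correct, and its skeleton is the paper's: reduce via Proposition \ref{thCcc} and the recalled result (i.e.\ Corollary \ref{thCc}) to the dichotomy ``sectional-hyperbolic for $X$ or for $-X$,'' then use Lyapunov stability of $\Lambda$ for $X$ to kill the $-X$ alternative by forcing a strong stable manifold of $-X$ inside $\Lambda$. The difference is in how the final contradiction is produced. The paper integrates the contracting subbundle of $-X$ at \emph{every} point $x\in\Lambda$, notes that Lyapunov stability gives $W^{ss}_{-X}(x)\subset\Lambda$ for all $x$, and then quotes the result on p.~556 of \cite{momo}. You instead work only at a singularity: under (\ref{eq11}) the codimension-one hypothesis forces $I(\sigma_0)=1$, and sectional expansion of $E^c_{-X}$ at $\sigma_0$ rules out a $2$-plane of $E^c_{-X}(\sigma_0)$ inside the $X$-unstable subspace, giving $\dim E^s_{-X}(\sigma_0)=n-2\geq 2$; then Lyapunov stability puts $W^{ss}_{-X}(\sigma_0)\subset W^u_X(\sigma_0)\subset\Lambda$, contradicting $\Lambda\cap W^{ss}_{-X}(\sigma_0)=\{\sigma_0\}$ --- the same standard fact the paper itself invokes (citing \cite{mp1}) in the proof of Proposition \ref{c1}, so you should cite it there rather than \cite{memo}. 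What your route buys is independence from \cite{momo}, replaced by an elementary index computation at the singularity plus a fact already used elsewhere in the paper; what the paper's route buys is brevity and no index bookkeeping, at the price of importing the stronger strong-stable-manifold theorem. Both hinge identically on the observation that Lyapunov stability for $X$ swallows the relevant $-X$-strong-stable (equivalently $X$-unstable) manifold into $\Lambda$.
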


\begin{proof}
By Corollary \ref{thCc} it suffices to prove that $\Lambda$ cannot be sectional-hyperbolic for $-X$.
Assume by contradiction that it does. Then,
by integrating the corresponding contracting subbundle,
we obtain a strong stable manifold $W^{ss}_{-X}(x)$, $\forall x\in \Lambda$.
But $\Lambda$ is Lyapunov stable for $X$ so $W^{ss}_{-X}(x)\subset \Lambda$,
$\forall x\in \Lambda$, contradicting p. 556 in \cite{momo}.
Then, $\Lambda$ cannot be sectional-hyperbolic for $-X$ and we are done.
\end{proof}

We also use Lemma \ref{43} to prove the following proposition.

\begin{proposition}
\label{c1}
Every nontrivial transitive sectional-hyperbolic set $\Lambda$
of a vector field $X$ in a closed $n$-manifold, $n\geq 3$, is strongly homogeneous
and satisfies $I(\sigma)=Ind(\Lambda)+1$, $\forall \sigma\in Sing(X,\Lambda)$.
\end{proposition}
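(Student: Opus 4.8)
The proof naturally splits into the two assertions. For the strong homogeneity, the plan is to exploit the robustness of sectional-hyperbolicity. Since the dominated splitting $T_\Lambda M=E^s_\Lambda\oplus E^c_\Lambda$ extends to the maximal invariant set of some neighborhood of $\Lambda$ and persists, with unchanged dimensions, under small $C^1$ perturbations of $X$, and since the contraction and sectional-expansion estimates are likewise robust, there are a neighborhood $\mathcal{U}$ of $X$ and a neighborhood $U$ of $\Lambda$ such that, for every $Y\in\mathcal{U}$, the maximal invariant set of $Y$ in $U$ is sectional-hyperbolic with contracting dimension $d:=dim(E^s_\Lambda)$. I then claim that every hyperbolic periodic orbit $O\subset U$ of such a $Y$ has $I(O)=d$. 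One inequality is immediate: $E^s_O$ is $DY_t$-invariant and uniformly contracting, hence $E^s_O\subset\hat{E}^s_O$ and $I(O)\geq d$. For the reverse, note first that $E^X_O\subset E^c_O$: if $\pi$ is a period of $O$ and $x\in O$, then $X(x)$ is fixed by $DY_\pi(x)$, and by invariance of the splitting its $E^s$-component is a $DY_\pi(x)$-fixed vector inside a uniformly contracting bundle, hence zero. Now if $I(O)>d$, then projecting $\hat{E}^s_O$ onto $E^c_O$ along $E^s_O$ yields, by $DY_t$-invariance of the splitting, a subbundle of $E^c_O$ of dimension $I(O)-d\geq1$ all of whose vectors are contracted by $DY_t$; combining it internally (if $I(O)-d\geq2$) or with the neutral line $E^X_O\subset E^c_O$ (if $I(O)-d=1$) produces a two-dimensional subspace $L\subset E^c_O$ with $|Det(DY_t/L)|\to0$, contradicting the sectional-expansion of $E^c$. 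Hence $I(O)=d$ for all such $O$, so $\Lambda$ is strongly homogeneous with $Ind(\Lambda)=d$ (well defined because, $\Lambda$ being nontrivial and transitive, Pugh's closing lemma applied to a regular point of $\Lambda$ produces hyperbolic periodic orbits of nearby fields inside $U$).

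For the equality $I(\sigma)=Ind(\Lambda)+1$, the plan is to feed the first part into Lemma \ref{43}. Since $\Lambda$ is now a nontrivial transitive strongly homogeneous set whose singularities are all hyperbolic, that lemma gives $\Delta(\sigma)\neq0$, with $I(\sigma)=Ind(\Lambda)$ when $\Delta(\sigma)<0$ and $I(\sigma)=Ind(\Lambda)+1$ when $\Delta(\sigma)>0$; so it suffices to exclude $\Delta(\sigma)<0$. Assume it holds; then $I(\sigma)=Ind(\Lambda)=d=dim(E^s_\sigma)$, so $E^s_\sigma$ is the whole stable eigenspace of $\sigma$, and integrating $E^s$ we obtain a strong stable manifold $W^{ss}_X(\sigma)$ that coincides with $W^s(\sigma)$. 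On the other hand, since $\Lambda$ is nontrivial and transitive with $\sigma\in\Lambda$, a dense orbit $\{X_t(p)\}$ of $\Lambda$ accumulates on $\sigma$ but is not eventually trapped near $\sigma$ (otherwise $\omega(p)=\{\sigma\}$, contradicting nontriviality); hence the times it spends near $\sigma$ tend to infinity, which forces the corresponding entry points to accumulate on $W^s_{loc}(\sigma)\setminus\{\sigma\}$, so $W^s(\sigma)\cap\Lambda\neq\{\sigma\}$. Then $W^{ss}_X(\sigma)\cap\Lambda\neq\{\sigma\}$, contradicting the standard fact that the strong stable manifold of a singularity of a sectional-hyperbolic set meets that set only at the singularity (the analogue for $-X$ and Lyapunov stable sets is precisely the fact used on p.\ 556 of \cite{momo} in Corollary \ref{thC}). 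Therefore $\Delta(\sigma)>0$ and $I(\sigma)=Ind(\Lambda)+1$.

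The routine parts are the persistence statements of the first paragraph and the Hartman--Grobman estimate (entry points near $W^s_{loc}(\sigma)$ once the passage time is large) in the second. I expect the main obstacle to be the bookkeeping in the first paragraph: verifying that the projection of $\hat{E}^s_O$ into $E^c_O$ genuinely commutes with $DY_t$ (so its image really consists of contracted vectors) and genuinely spans at least two dimensions together with $E^X_O$, so that the sectional-expansion inequality can be invoked to reach a contradiction; and, in the second paragraph, pinning down the precise reference for ``the strong stable manifold of a singularity meets a sectional-hyperbolic set only at that singularity'', which should be available in the sectional-hyperbolic literature (e.g.\ \cite{memo}).
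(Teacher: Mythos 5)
Your proposal is correct and follows essentially the same route as the paper: persistence of the sectional-hyperbolic splitting yields strong homogeneity of index $\dim E^s_\Lambda$ (you merely make explicit the index computation for nearby periodic orbits that the paper leaves implicit), and the equality $I(\sigma)=Ind(\Lambda)+1$ is obtained exactly as in the paper via Lemma \ref{43}, excluding $\Delta(\sigma)<0$ by the identification $W^{ss}(\sigma)=W^s(\sigma)$ and the contradiction with $\Lambda\cap W^{ss}(\sigma)=\{\sigma\}$ (cited there from \cite{mp1}).
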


\begin{proof}
Since transitiveness implies connectedness we have that the strong stable subbundle
$E^s_\Lambda$ of $\Lambda$ has constant dimension.
From this and the persistence of
the sectional-hyperbolic splitting
we obtain that $\Lambda$ is strongly homogeneous of index $Ind(\Lambda)=dim(E^s_x)$,
for $x\in \Lambda$.
Now fix a singularity $\sigma$. To prove $I(\sigma)=Ind(\Lambda)+1$ we only need to prove
that $\Delta(\sigma)>0$ (c.f. Lemma \ref{43}).

Suppose by contradiction that $\Delta(\sigma)\leq 0$.
Then, $\Delta(\sigma)<0$ and $I(\sigma)=Ind(\Lambda)$ by Lemma \ref{43}.
Therefore, $dim(E^s_\sigma)=dim(\hat{E}^s_\sigma)$ where $T_\sigma M=\hat{E}^s_\sigma\oplus \hat{E}^u_\sigma$
is the hyperbolic splitting of $\sigma$ (as hyperbolic singularity of $X$).
Now, let $W^s(\sigma)$ the stable manifold of $\sigma$ and
$W^{ss}(\sigma)$ be the strong stable manifold of $\sigma$ obtained by integrating
the strong stable subbundle $E^s_\Lambda$ (c.f. \cite{hps}).
Notice that $W^{ss}(\sigma)\subset W^s(\sigma)$.
As
$dim(W^{ss}(\sigma))=dim(E^s_\sigma)=dim(\hat{E}_\sigma^s)=dim(W^s(\sigma))$ we get
$W^{ss}(\sigma)=W^s(\sigma)$.
But $\Lambda$ is nontrivial transitive so the dense orbit
will accumulate at some point in $W^s(\sigma)\setminus \{\sigma\}$.
As $W^{ss}(\sigma)=W^{s}(\sigma)$ such a point
must belong to $(\Lambda\cap W^{ss}(\sigma))\setminus \{\sigma\}$.
On the other hand, it is well known that $\Lambda\cap W^{ss}(\sigma)=\{\sigma\}$ (c.f. \cite{mp1})
so we obtain a contradiction which proves the result.
\end{proof}

We say that
$\Lambda$ is an {\em attracting set}
if there is a neighborhood $U$ of it such that
$$
\Lambda=\bigcap_{t>0}X_t(U).
$$
On the other hand, a {\em sectional-hyperbolic attractor}
is a transitive attracting set which is also a sectional-hyperbolic set.
An {\em unstable branch} of a hyperbolic singularity $\sigma$ of a vector field
is an orbit in $W^u(\sigma)\setminus\{\sigma\}$.
We say that $\Lambda$ has
{\em dense singular unstable branches} if every unstable branch of every hyperbolic singularity on it
is dense in $\Lambda$.

The following is a straightforward extension of Theorem D in \cite{mp1} to higher dimensions
(with similar proof).

\begin{proposition}
\label{thD}
Let $\Lambda$ be a Lyapunov stable sectional-hyperbolic set
of a vector field $X$ in a closed $n$-manifold, $n\geq 3$.
If $\Lambda$ has both singularities, all of Morse index $n-1$, and
dense singular unstable branches, then $\Lambda$ is a sectional-hyperbolic attractor of $X$.
\end{proposition}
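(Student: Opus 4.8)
The plan is to follow the proof of Theorem D in \cite{mp1}, adapting its three-dimensional arguments to the sectional-hyperbolic setting. First I record the reductions forced by the hypotheses. Fix a singularity $\sigma\in Sing(X,\Lambda)$. Since $I(\sigma)=n-1$, it is hyperbolic with one-dimensional unstable manifold $W^u(\sigma)=\{\sigma\}\cup\ell^+_\sigma\cup\ell^-_\sigma$, and by hypothesis $\overline{\ell^+_\sigma}=\overline{\ell^-_\sigma}=\Lambda$, so in particular $W^u(\sigma)\subset\Lambda$. Using that both branches of $\sigma$ are dense one checks that $\omega(q)=\Lambda$ for every $q\in\ell^+_\sigma\cup\ell^-_\sigma$, whence $\Lambda$ is a nontrivial transitive set, and Proposition \ref{c1} gives $Ind(\Lambda)=I(\sigma)-1=n-2$. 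Therefore the contracting subbundle $E^s_\Lambda$ has dimension $n-2$ and the central subbundle $E^c_\Lambda$ has dimension exactly $2$, so $\Lambda$ lies in the higher-dimensional version of the geometric Lorenz picture: it carries a locally invariant codimension-two strong stable foliation $W^{ss}$, obtained by integrating a continuous extension of $E^s_\Lambda$ to a neighborhood (cf.\ \cite{hps}), and every singularity in $\Lambda$ is Lorenz-like.

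Next I localize the dynamics. By Lyapunov stability of $\Lambda$ choose an open forward-invariant neighborhood $U$ of $\Lambda$, so small that the dominated splitting $E^s\oplus E^c$ of $\Lambda$ persists, with uniformly contracted $E^s$, to the maximal invariant set $A:=\bigcap_{t\ge 0}X_t(\overline U)$ in $\overline U$, and so small that $\overline U$ contains no singularity of $X$ outside $\Lambda$. Then $\Lambda\subset A$, $Sing(X,A)=Sing(X,\Lambda)$, and $A$ inherits the partially hyperbolic structure with two-dimensional center. It suffices to prove $A=\Lambda$: indeed this yields $\Lambda=\bigcap_{t>0}X_t(U)$ with $U$ a neighborhood of $\Lambda$, so $\Lambda$ is an attracting set, and being transitive and sectional-hyperbolic it is a sectional-hyperbolic attractor of $X$.

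To prove $A=\Lambda$ I transcribe the argument of Theorem D in \cite{mp1}. Fix $\sigma\in Sing(X,\Lambda)=Sing(X,A)$; by density $\overline{W^u(\sigma)}=\Lambda$, so it is enough to show $A\subset\overline{W^u(\sigma)}$. Near the Lorenz-like singularity $\sigma$ there is a cross-section $\Sigma$ transverse to $W^{ss}(\sigma)$ on which the return map induced by $X$ carries a dominated splitting with uniformly contracted stable direction (the traces of the $W^{ss}$-leaves) and with the two tips $\Sigma\cap\ell^+_\sigma$ and $\Sigma\cap\ell^-_\sigma$ controlling where the returns land. Since every orbit of $A$ stays for all time in the small set $\overline U$, the contraction along $W^{ss}$ together with this control of the returns drives it into the closure of the union of the unstable branches of the singularities of $A$, and that closure, by the previous paragraph, is contained in $\Lambda$. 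Hence $A\subset\overline{W^u(\sigma)}=\Lambda$, so $A=\Lambda$ and we are done.

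The only real work is this last step, converting ``dense singular unstable branches $+$ Lyapunov stability $+$ sectional-hyperbolicity'' into $A\subset\Lambda$; but thanks to the reduction $\dim E^c_\Lambda=2$ it is a direct transcription of the corresponding geometric argument in \cite{mp1}, the sole difference being that the one-dimensional strong stable leaves of the three-dimensional model are replaced by $(n-2)$-dimensional ones, the central (area-expanding) direction remaining two-dimensional. The rest is routine.
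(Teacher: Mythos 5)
Your proposal takes essentially the same route as the paper, which gives no independent argument for Proposition \ref{thD} and simply declares it ``a straightforward extension of Theorem D in \cite{mp1} \dots (with similar proof)'': your preliminary reductions (transitivity from the dense unstable branches, $Ind(\Lambda)=n-2$ via Proposition \ref{c1}, hence $\dim E^c_\Lambda=2$, Lorenz-like singularities and a codimension-two strong stable foliation, so the three-dimensional geometry of \cite{mp1} carries over with $(n-2)$-dimensional strong stable leaves) are correct and are exactly what makes that extension straightforward. Note only that your key final step ($A\subset Cl(W^u(\sigma))$ for the maximal invariant set $A$ of a positively invariant neighborhood) is deferred to the geometric argument of \cite{mp1} rather than carried out, which is the same level of detail as the paper itself.
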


Now we recall the star flow's terminology from \cite{w}.

\begin{definition}
\label{star-flow}
A {\em star flow} is a $C^1$ vector field which cannot be $C^1$-approximated
by ones exhibiting non-hyperbolic closed orbits.
\end{definition}

Corollary \ref{thC} together with propositions \ref{c1} and \ref{thD} implies
the key result below.

\begin{proposition}
\label{p1}
A $C^1$-generic vector field $X$ on a closed $n$-manifold, $\forall n\geq 3$,
without points accumulated by hyperbolic periodic orbits of different Morse indices
is a star flow. If, in addition, $n\geq 4$, then
the codimension one singularities of $X$ accumulated by periodic orbits
belong to a sectional-hyperbolic attractor up to flow-reversing.
\end{proposition}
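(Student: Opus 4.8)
The plan is to treat the two assertions separately, reducing each to the results of this section together with standard $C^1$-generic facts; the weight of the argument is on the second one. For the first assertion I would argue by contradiction: if the $C^1$-generic $X$ is not a star flow, then arbitrarily $C^1$-close to $X$ there is a vector field with a non-hyperbolic closed orbit, and since $X$ is generic (hence Kupka--Smale) such an orbit is periodic and appears only after perturbation. Perturbing the derivative cocycle along it by Franks' lemma one obtains, for vector fields arbitrarily $C^1$-close to $X$, hyperbolic periodic orbits of two consecutive Morse indices through points arbitrarily close together; Hayashi's connecting lemma \cite{h} then glues these into a single point lying in the closures of the hyperbolic periodic orbits of both indices. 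Passing to the residual set on which the maps $Y\mapsto\overline{\mathrm{Per}_i(Y)}$ (closure of the hyperbolic periodic orbits of index $i$) are continuous --- a standard fact, cf.\ \cite{cmp} --- this configuration is inherited by $X$ itself, contradicting the hypothesis. Hence $X$ is a star flow.

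\textbf{Setup for the second assertion.} Assume moreover $n\geq 4$ and let $\sigma$ be a singularity of codimension one accumulated by periodic orbits. By Hayashi's connecting lemma \cite{h} and its standard $C^1$-generic consequences (as in \cite{mp1}, see also \cite{cmp}), $\sigma$ belongs to a nontrivial transitive set $\Lambda$ with dense periodic orbits --- concretely the homoclinic class of a hyperbolic periodic orbit, which $C^1$-generically equals the chain recurrence class of $\sigma$ --- and $\Lambda$ is Lyapunov stable for $X$ or for $-X$; reversing the flow if necessary we may assume it is Lyapunov stable for $X$. Since $X$ is a star flow without points accumulated by hyperbolic periodic orbits of different Morse indices, all periodic orbits in a neighbourhood of $\Lambda$ are hyperbolic of one and the same index, so $\Lambda$ is strongly homogeneous, say of index $d=Ind(\Lambda)$ (as in \cite{glw}, \cite{gwz}), and its singularities are hyperbolic because $X$ is Kupka--Smale. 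A nontrivial transitive set contains neither a sink nor a source, so $1\leq d\leq n-2$.

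\textbf{Conclusion of the second assertion.} Now I feed $\Lambda$ into the machinery of this section. Its singularities lie in the closure of its dense index-$d$ periodic orbits, so by Lemma \ref{43} each has Morse index $d$ or $d+1$, and the crux is to verify (\ref{eq1}), i.e.\ to rule out Morse index $d$ for all of them. When every singularity of $\Lambda$ is of codimension one --- automatic in the setting of Theorem A, where all singularities accumulated by periodic orbits are --- Proposition \ref{thCcc} gives (\ref{eq1}) or (\ref{eq11}), and (\ref{eq11}) is impossible because, by the argument in the proof of Corollary \ref{thC}, it would make $\Lambda$ sectional-hyperbolic for $-X$, contradicting its Lyapunov stability for $X$ through p.~556 of \cite{momo}. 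Granting (\ref{eq1}), all singularities of $\Lambda$ have Morse index $d+1$; since $\sigma\in Sing(X,\Lambda)$ has codimension one and $d\geq 1$, $n\geq 4$, this forces $d=n-2$ and every singularity of $\Lambda$ to have Morse index $n-1$, hence to be of codimension one, so Corollary \ref{thC} applies and $\Lambda$ is sectional-hyperbolic for $X$. By Proposition \ref{c1} every $\sigma'\in Sing(X,\Lambda)$ satisfies $I(\sigma')=Ind(\Lambda)+1=n-1$, and, $X$ being $C^1$-generic and $\Lambda$ a Lyapunov stable transitive set, each unstable branch of each such $\sigma'$ is dense in $\Lambda$ (the generic argument of \cite{mp1}). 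Proposition \ref{thD} then yields that $\Lambda$ is a sectional-hyperbolic attractor of $X$, and $\sigma\in\Lambda$; this is the assertion, up to flow-reversing.

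\textbf{Main obstacle.} Apart from assembling the machinery, the delicate points are the soft $C^1$-generic inputs: that $\sigma$ sits in a nontrivial transitive set with dense periodic orbits coinciding with its chain recurrence class, that this set is Lyapunov stable for $X$ or $-X$ (which also pins down the flow direction), and that the singular unstable branches are dense; these combine the star property with generic results on chain recurrence classes and quasi-attractors. The genuinely new difficulty --- if one does not assume \emph{all} singularities accumulated by periodic orbits to have codimension one --- is excluding a singularity of Morse index $d$ in $\Lambda$ while $\sigma$ itself has codimension one: I expect this to be handled in the spirit of the proof of Proposition \ref{c1} (the $d$-dimensional dominated subbundle coming from strong homogeneity would integrate to $W^{s}(\tau)$, forcing $\Lambda\cap W^{ss}(\tau)=\{\tau\}$, which is incompatible with the dense orbit of $\Lambda$ accumulating on $W^{s}(\tau)\setminus\{\tau\}$), combining domination, Lyapunov stability and Lemma \ref{43}. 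Finally, all the genericity conditions invoked have to be arranged to hold simultaneously on a single residual subset of $\mathcal{X}^1$.
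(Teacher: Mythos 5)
Your first part is essentially the paper's argument: a Franks-type unfolding of a non-hyperbolic periodic orbit of a nearby field produces hyperbolic periodic orbits of two different indices in a common small region, and the generic (semi)continuity of the maps $Y\mapsto Cl(Per_i(Y))$ (packaged in the paper as robustly disjoint neighborhoods $U_i\supset Cl(Per_i(Y))$ for all $Y$ near $X$) transfers the contradiction back to $X$. Your extra appeal to Hayashi's connecting lemma at that point is superfluous (and not really what that lemma does), but harmless.

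The second part has a genuine gap at its very first step. Everything downstream (Proposition \ref{thCcc}, the flow-reversal exclusion via \cite{momo}, Corollary \ref{thC}, Proposition \ref{c1}, dense unstable branches, Proposition \ref{thD}) needs as input a \emph{nontrivial transitive, Lyapunov stable, strongly homogeneous} set containing $\sigma$, and you produce it by taking the chain-recurrence/homoclinic class of $\sigma$ and asserting, as a ``standard $C^1$-generic consequence'', that it has dense periodic orbits and is Lyapunov stable for $X$ or for $-X$. That is not a standard generic fact, and as a blanket statement it is false: classes of saddle type (already a nontrivial hyperbolic basic set, or a singular-hyperbolic set that is neither attractor nor repeller) are Lyapunov stable for neither time direction, and nothing in \cite{cmp}, \cite{h} or quasi-attractor theory delivers this property for the class of $\sigma$. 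This is precisely where the codimension-one hypothesis must enter, and it is where the paper uses it: assuming $I(\sigma)=n-1$ (otherwise replace $X$ by $-X$ at the outset), $W^u(\sigma)$ is one-dimensional, so for generic $X$ the set $Cl(W^u(\sigma))$ (and $\omega(q)$ for $q$ in the unstable branches) is Lyapunov stable by \cite{cmp'}, and it is transitive by Lemma 4.2 of \cite{mp1} because $\sigma$ is accumulated by periodic orbits; strong homogeneity is then obtained not from dense periodic orbits in $\Lambda$ but from connectedness of $Cl(W^u(\sigma))$ together with the disjoint decomposition of $\Omega(X)$ into the sets $Cl(Per_i(X))$ and the non-accumulated singularities, which forces $Cl(W^u(\sigma))\subset Cl(Per_{i_0}(X))$. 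Note also that in your scheme the time direction is decided by the unproved Lyapunov stability, whereas in the paper it is decided from the start by $I(\sigma)\in\{1,n-1\}$. Once a set with these properties is in hand, your concluding chain coincides with the paper's; and the scruple you raise about possible non-codimension-one singularities inside $\Lambda$ is a fair one, but it is equally present (and silently passed over) in the paper's own proof, and disappears under the hypothesis of Theorem A, as you observe. The missing piece, then, is the construction of the Lyapunov stable transitive set itself, not the machinery applied to it.
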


\begin{proof}
We will use the following notation.
Given $Z\in \mathcal{X}^1$ and $0\leq i\leq n-1$ we denote by
$Per_i(Z)$ the union of the hyperbolic periodic orbits of Morse index $i$.
The closure operation will be denoted by $Cl(\cdot)$.

Since $X$ has no point accumulated by hyperbolic periodic orbits of different Morse indices
one has
\begin{equation}
 \label{separa}
Cl(Per_i(X))\cap Cl(Per_j(X))=\emptyset,
\quad\quad
\forall i,j\in \{0,\cdots, n-1\}, \quad i\neq j.
\end{equation}
Then, since $X$ is $C^1$-generic, standard lower-semicontinuous arguments (c.f. \cite{cmp})
imply that there are
a neighborhood $\mathcal{U}$ of $X$ in $\mathcal{X}^1$ and
a pairwise disjoint collection of neighborhoods $\{U_i: 0\leq i\leq n-1\}$ such that
$Cl(Per_i(Y))\subset U_i$ for all $0\leq i\leq n-1$ and $Y\in \mathcal{U}$.

Let us prove that $X$ is a star flow.
When necessary we use the notation $I_X(O)$ to indicate dependence on $X$.
By contradiction assume that $X$ is not a star flow.
Then, there is a vector field $Y\in \mathcal{U}$
exhibiting a non-hyperbolic closed orbit $O$.
Since $X$ is generic we can assume by the Kupka-Smale Theorem \cite{hk} that
$O$ is a periodic orbit.
Unfolding the eigenvalues of $O$ is a suitable way we would
obtain two vector fields $Z_1,Z_2\in \mathcal{U}$ of which
$O$ is a hyperbolic periodic orbit with $I_{Z_1}(O)\neq I_{Z_2}(O)$,
$1\leq I_{Z_1}(O)\leq n-1$ and $1\leq I_{Z_2}(O)\leq n-1$.
Consequently, $O\subset U_i\cap U_j$ where $i=I_{Z_1}(O)$ and $j=I_{Z_2}(O)$
which contradicts that the collection $\{U_i: 0\leq i\leq n-1\}$ is pairwise disjoint.
Therefore, $X$ is a star flow.

Next we prove that $Cl(Per_i(X))$ is a strongly homogeneous set of index $i$, $\forall 0\leq i\leq n-1$.
Take $Y\in \mathcal{U}$ and a hyperbolic periodic orbit
$O\subset U_i$ of Morse index $I_Y(O)=j$. Then, $O\subset Cl(Per_j(Y))$ and so
$O\subset U_j$ from which we get $O\subset U_i\cap U_j$.
As the collection $\{U_i: 0\leq i\leq n-1\}$ is disjoint we conclude that
$i=j$ and so
every hyperbolic periodic orbit $O\subset U_i$ of every vector field $Y\in \mathcal{U}$
has Morse index $I_Y(O)=i$.
Therefore, $Cl(Per_i(X))$ is a strongly homogeneous set of index $i$.

Now, we prove that every codimension one singularity $\sigma$
accumulated by periodic orbits belongs to a sectional-hyperbolic attractor
up to flow-reversing.
More precisely, we prove that if $I(\sigma)=n-1$ (resp. $I(\sigma)=1$), then $\sigma$ belongs to a
sectional-hyperbolic attractor of $X$ (resp. of $-X$).
We only consider the case $I(\sigma)=n-1$ for the case $I(\sigma)=1$
can be handled analogously by just replacing $X$ by $-X$.

Since $I(\sigma)=n-1$ one has $dim(W^u(\sigma))=1$ and,
since $X$ is generic, we can assume that both
$Cl(W^u(\sigma))$ and $\omega(q)$ (for $q\in W^u(\sigma)\setminus\{\sigma\}$) are Lyapunov stable sets of
$X$ (c.f. \cite{cmp'}). As $\sigma$ is accumulated by periodic orbits we obtain from Lemma 4.2 in \cite{mp1} that
$Cl(W^u(\sigma))$ is a transitive set.

We claim that $Cl(W^u(\sigma))$ is strongly homogeneous.
Indeed, since $X$ is generic the General Density Theorem \cite{p} implies $\Omega(X)=Cl(Per(X)\cup Sing(X))$.
Denote by $Sing^*(X)$ is the set of singularities accumulated by periodic orbits.
Then, there is a decomposition
$$
\Omega(X)=\left(\bigcup_{0\leq i\leq n-1}
Cl(Per_i(X))\right)\cup\left(\bigcup_{\sigma'\in Sing(X)\setminus Sing^*(X)}\{\sigma'\}\right)
$$
which is disjoint by (\ref{separa}).
In addition,
$Cl(W^u(\sigma))$ is transitive and so it is connected
and contained in $\Omega(X)$.
As $\sigma\in Sing^*(X)$ by hypothesis
we conclude that
$Cl(W^u(\sigma)) \subset Cl(Per_{i_0}(X))$ for some $0\leq i_0\leq n-1$.
But we have proved above that $Cl(Per_{i_0}(X))$ is a strongly homogeneous set of index $i_0$,
so, $Cl(W^u(\sigma))$ is also a
strongly homogeneous set of index $i_0$. The claim follows.

On the other hand,
$X$ is a star flow and so it has finitely many sinks and sources \cite{li}, \cite{pl}.
From this we obtain $1\leq i_0\leq n-2$ and so $1\leq Ind(Cl(W^u(\sigma)))\leq n-2$.
Summarizing, we have proved that $Cl(W^u(\sigma))$ is a transitive set with singularities,
all of them of codimension one, which is a Lyapunov stable strongly homogeneous set
of index $1\leq Ind(Cl(W^u(\sigma)))\leq n-2$.
As certainly $Cl(W^u(\sigma))$ is nontrivial Corollary \ref{thC}
applied to $\Lambda=Cl(W^u(\sigma))$ implies that $Cl(W^u(\sigma))$ is sectional-hyperbolic.

Once we have proved that $Cl(W^u(\sigma))$ is sectional-hyperbolic
we apply Proposition \ref{c1} to $\Lambda=Cl(W^u(\sigma))$ yielding
$I(\sigma')=i_0+1$, $\forall\sigma'\in Sing(X,Cl(W^u(\sigma)))$.
But $\sigma\in Cl(W^u(\sigma))$ and $I(\sigma)=n-1$ so
$i_0=n-2$ by taking $\sigma'=\sigma$ above. Consequently, $I(\sigma')=n-1$ and so
$dim(W^u(\sigma'))=1$, $\forall\sigma'\in Cl(W^u(\sigma))$.
This implies two things. Firstly that every singularity in $Cl(W^u(\sigma))$ has Morse index $n-1$ and,
secondly, since $X$ is generic, we can assume that
$Cl(W^u(\sigma))$ has dense unstable branches (c.f. Lemma 4.1 in \cite{mp1}).
So, $Cl(W^u(\sigma))$ is a sectional-hyperbolic attractor by Proposition \ref{thD}
applied to $\Lambda=Cl(W^u(\sigma))$. Since $\sigma\in Cl(W^u(\sigma))$ we obtain the result.
\end{proof}

The last ingredient is the proposition below whose
proof follows from Theorem B of \cite{gw} as in the proof of Theorem B p. 1582 of \cite{mp1}.

\begin{proposition}
\label{star=>sec-axa}
If $n\geq 3$, every $C^1$-generic star flow whose
singularities accumulated by periodic orbits
belong to a sectional-hyperbolic attractor up to flow-reversing is sectional-Axiom A.
\end{proposition}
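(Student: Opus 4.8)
The plan is to follow closely the proof of Theorem B of \cite{mp1} (p.~1582), replacing \emph{singular-hyperbolic} by \emph{sectional-hyperbolic}, dimension three by dimension $n$, and the three-dimensional instance of Gan--Wen's theorem used there by its general form, Theorem B of \cite{gw}. So let $X$ be a $C^1$-generic star flow satisfying the hypothesis. First I would collect the standard $C^1$-generic and star facts: by the Kupka--Smale theorem \cite{hk} every closed orbit of $X$ is hyperbolic, by the General Density Theorem \cite{p} one has $\Omega(X)=Cl(Per(X))\cup Sing(X)$, and by the star condition $X$ has only finitely many sinks and sources (\cite{li},\cite{pl}) and (always) finitely many singularities. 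Write $Sing^*(X)$ for the set of singularities accumulated by periodic orbits.

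Next I would isolate the singular pieces. Each $\sigma\in Sing^*(X)$ lies, by hypothesis, in a sectional-hyperbolic attractor $A$ of $X$ or of $-X$. Fixing a trapping region $U$ of such an $A$, any nonwandering point that falls into $U$ must already lie on $A$, so $\Omega(X)\cap U=A$; hence $A$ is clopen in $\Omega(X)$. Since each such attractor absorbs at least one singularity, there are at most $\#Sing(X)$ of them, they are pairwise disjoint and disjoint from the rest of $\Omega(X)$, each is transitive and sectional-hyperbolic for $X$ or $-X$ by hypothesis, and each carries a dense set of periodic orbits: $A$ contains a periodic orbit (any periodic orbit approaching $\sigma\in A$ eventually lies in the clopen set $A$), so, $X$ being $C^1$-generic, $A$ is a homoclinic class. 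On the other hand, if $\sigma\in Sing(X)\setminus Sing^*(X)$ then $\sigma\notin Cl(Per(X))$, so $\{\sigma\}$ is clopen in $\Omega(X)$ and forms a trivial hyperbolic transitive piece.

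Then I would turn to the nonsingular part. Removing from $\Omega(X)$ the finitely many sinks, sources, attractors $A$ above, and trivial singular pieces $\{\sigma\}$ — each clopen in $\Omega(X)$ — leaves a compact invariant set $N$ which contains no singularity, every singularity having been absorbed above. Here I would invoke Theorem B of \cite{gw} to conclude that the nonsingular set $N$ is hyperbolic, and then the Spectral Decomposition Theorem \cite{hk} to split $N$ into finitely many disjoint transitive hyperbolic pieces with dense closed — hence dense periodic, as $N$ is nonsingular — orbits. Collecting the sinks, the sources, the attractors, the trivial singular pieces, and the basic pieces of $N$, one obtains a finite disjoint decomposition of $\Omega(X)$ into transitive sets with dense periodic (or, for the $\{\sigma\}$'s, closed) orbits, each of which is hyperbolic for $X$, sectional-hyperbolic for $X$, or sectional-hyperbolic for $-X$; that is, $X$ is sectional-Axiom A.

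The substantive input, and the only real obstacle, is the step invoking Theorem B of \cite{gw}: one needs the nonsingular part of the nonwandering set of a star flow to be hyperbolic and to consist of \emph{finitely many} basic pieces. This is precisely the $n$-dimensional counterpart of the three-dimensional fact used at p.~1582 of \cite{mp1}, and I would import it unchanged; everything else is the $C^1$-generic bookkeeping already present in \cite{mp1} (General Density, and the clopen-ness in the chain recurrent set of attractors and of singularities not accumulated by periodic orbits).
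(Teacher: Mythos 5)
Your proposal is correct and follows essentially the same route as the paper: the paper gives no independent argument for Proposition \ref{star=>sec-axa}, stating only that it ``follows from Theorem B of \cite{gw} as in the proof of Theorem B p.~1582 of \cite{mp1}'', which is precisely the scheme you reconstruct (isolate the sectional-hyperbolic attractors/repellers containing the singularities accumulated by periodic orbits and the isolated singularities as clopen pieces of $\Omega(X)$, apply Gan--Wen's theorem to the nonsingular remainder, and conclude with the spectral decomposition). Your added $C^1$-generic bookkeeping (General Density Theorem, homoclinic-class genericity for dense periodic orbits in the attractors, clopen-ness of the pieces) is consistent with what the cited proof in \cite{mp1} does.
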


\begin{proof}[Proof of Theorem A]
Let $X$ be a $C^1$-generic vector field on a closed $n$-manifold, $n\geq 3$,
all of whose singularities accumulated by periodic orbits
have codimension one.
Suppose in addition that there is no point accumulated by hyperbolic periodic orbits of different Morse indices.
Since $X$ is $C^1$-generic we have by Proposition \ref{p1} that $X$ is a star flow.

If $n=3$ then, since $X$ is generic, Theorem B in \cite{mp1} implies that $X$ is sectional-Axiom A.

If $n\geq 4$ then, by Proposition \ref{p1}, since the singularities accumulated
by periodic orbits have codimension one,
we have that all such singularities belong to a sectional-hyperbolic attractor
up to flow-reversing.
Then, $X$ is sectional-Axiom A by Proposition \ref{star=>sec-axa}.
\end{proof}

Now we move to the proof of Theorem B.

Hereafter we denote by $W^s_X(\cdot)$ and $W^u_X(\cdot)$ the
stable and unstable manifold operations \cite{hps} with emphasis on $X$. Notation $O(p)$ (or $O_X(p)$
to emphasize $X$) will indicate the orbit of $p$ with respect to $X$.
By a {\em periodic point} we mean a point belonging to a periodic orbit of $X$.
As usual the notation $\pitchfork$ will indicate the transversal intersection operation.

\begin{lemma}
\label{l1}
There exists a residual subset $\SR\subset \mathcal{X}^1$ with the following property:
If $X\in \SR$ has two periodic points $q_0$ and $p_0$ such that for any
neighborhood $\SU$ of $X$ there exists $Y\in \SU$ such that the continuations
of $q(Y)$ and $p(Y)$ of $q_0$ and $p_0$ respectively are defined and satisfy
$W^s_Y(O(q(Y)))\pitchfork W^u_Y(O(p(Y)))\neq \emptyset$. Then $X$ satisfies
$$W^s_X(O(q_0))\pitchfork W^u_X(O(p_0))\neq \emptyset.$$
\end{lemma}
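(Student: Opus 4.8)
The plan is to build $\SR$ as a countable intersection of open-dense sets indexed by pairs of periodic orbits, using the standard argument that a "robustly approximable" transverse intersection is actually realized generically. First I would set up a countable basis. Fix a countable dense set of vector fields $\{X_k\}$ in $\mathcal{X}^1$ together with, for each $k$, a countable collection of "marked" hyperbolic periodic orbits appearing for $X_k$ (these exist by the Kupka--Smale theorem on a residual set); equivalently, one parametrizes pairs of periodic orbits by rational data describing their periods and approximate locations. For each such combinatorial datum $\tau$ (a pair of hyperbolic periodic orbits, specified up to their continuations) define
$$
\mathcal{A}_\tau=\{\,X:\ \text{the orbits in }\tau\text{ persist near }X\text{ and }W^s_X(O(q_0))\pitchfork W^u_X(O(p_0))\neq\emptyset\,\},
$$
$$
\mathcal{B}_\tau=\{\,X:\ \text{for some neighborhood }\SU\text{ of }X,\ \text{no }Y\in\SU\text{ has }W^s_Y(O(q(Y)))\pitchfork W^u_Y(O(p(Y)))\neq\emptyset\,\}.
$$
The set $\mathcal{B}_\tau$ is open by construction, and $\mathcal{A}_\tau$ is open because a transverse intersection of the invariant manifolds of hyperbolic periodic orbits is a $C^1$-robust property (the invariant manifolds vary continuously in the $C^1$ topology on compact pieces, and transversality is stable). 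Moreover $\mathcal{A}_\tau\cup\mathcal{B}_\tau$ is dense: if $X\notin\mathcal{B}_\tau$ then by definition every neighborhood of $X$ contains some $Y$ with the transverse intersection, i.e.\ $X\in Cl(\mathcal{A}_\tau)$; hence $\mathcal{A}_\tau\cup\mathcal{B}_\tau$ contains $\mathcal{A}_\tau\cup(\mathcal{X}^1\setminus Cl(\mathcal{A}_\tau))$, which is dense.

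Next I would take
$$
\SR=\bigcap_{\tau}\bigl(\mathcal{A}_\tau\cup\mathcal{B}_\tau\bigr)\cap \mathcal{KS},
$$
where $\mathcal{KS}$ is the residual set of Kupka--Smale vector fields; since the index set of $\tau$'s is countable, $\SR$ is residual. Now suppose $X\in\SR$ has periodic points $q_0,p_0$ with the stated property: for every neighborhood $\SU$ of $X$ there is $Y\in\SU$ whose continuations $q(Y),p(Y)$ are defined and satisfy $W^s_Y(O(q(Y)))\pitchfork W^u_Y(O(p(Y)))\neq\emptyset$. Let $\tau$ be the datum recording the pair $(O(q_0),O(p_0))$. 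The hypothesis says precisely that $X\notin\mathcal{B}_\tau$ (no neighborhood of $X$ is free of the transverse intersection). Since $X\in\mathcal{A}_\tau\cup\mathcal{B}_\tau$, we conclude $X\in\mathcal{A}_\tau$, i.e.\ $W^s_X(O(q_0))\pitchfork W^u_X(O(p_0))\neq\emptyset$, which is the desired conclusion.

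The main obstacle is bookkeeping rather than geometry: one must make the index set of "marked periodic orbits" genuinely countable and stable under continuation, so that $\tau$ is well defined for $X\in\SR$ and so that the hypothesis of the lemma translates cleanly to "$X\notin\mathcal{B}_\tau$". This is handled by working on the Kupka--Smale residual set (so all periodic orbits are hyperbolic and their continuations are locally unique and unambiguous) and by noting that hyperbolic periodic orbits, being isolated, can be enumerated by rational approximations to their periods and to a point on each orbit; continuations then match up these labels in a neighborhood. A secondary point to check carefully is the openness of $\mathcal{A}_\tau$ when the periodic orbits have continuations only on an open set: one restricts attention to that open set, on which the standard $\lambda$-lemma / continuous dependence of compact pieces of $W^s,W^u$ gives robustness of the transverse intersection. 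With these routine verifications in place, the argument above is complete.
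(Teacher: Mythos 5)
Your proposal is correct and follows essentially the same route as the paper: there, $\SR$ is likewise a countable intersection of sets $A_{n,m}\cup\bigl(\mathcal{X}^1\setminus Cl(A_{n,m})\bigr)$, where $A_{n,m}$ is the open set of fields having a transverse intersection $W^s(O(p))\pitchfork W^u(O(q))\neq\emptyset$ for periodic points $p\in U_n$, $q\in U_m$ taken from a countable basis of $M$, and the conclusion is drawn exactly as you do from the hypothesis forcing $X\notin B_{n,m}$. Your only real difference is bookkeeping: you index by labeled pairs of hyperbolic orbits and their continuations (which, if anything, pins the conclusion to $p_0,q_0$ more explicitly than the paper's indexing by basis open sets), but the mechanism is identical.
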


\begin{proof}
Indeed, let $\{U_n\}$ be a countable basis of the topology of $M$. Now,
we define the set $A_{n,m}$ as the set of vector fields such that there
exist a periodic point $p$ in $U_n$ and a periodic point $q$ in $U_m$
such that $W^s(O(p))\pitchfork W^u(O(q))\neq \emptyset$. Observe that
$A_{n,m}$ is an open set.
Define $B_{n,m}=\mathcal{X}^1\setminus Cl(A_{n,m})$. Thus the set
$$\SR=\bigcap_{n,m=0}^{\infty} (A_{n,m}\cup B_{n,m})$$
is residual.
If $X$ belongs to $\SR$ and satisfies the hypothesis then there
exist $n$ and $m$ such that $p_0\in U_n$ and $q_0\in U_m$. Moreover,
the hypothesis implies that $X\notin B_{n,m}$.
Thus $X\in A_{n,m}$ and the proof follows.
\end{proof}

We use this lemma to prove the following one.

\begin{lemma}
\label{l3}
A $C^1$ generic star flow with spectral decomposition
has no points accumulated by hyperbolic periodic orbits of different Morse indices.
\end{lemma}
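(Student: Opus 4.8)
The plan is to argue by contradiction using the spectral decomposition together with the previous lemma and the star hypothesis. Suppose $X$ is a $C^1$-generic star flow with spectral decomposition $\Omega(X)=\Lambda_1\cup\cdots\cup\Lambda_l$ into transitive sets, and suppose there is a point $x$ accumulated by hyperbolic periodic orbits of Morse indices $i\neq j$. Since $x\in\Omega(X)=Cl(Per(X)\cup Sing(X))$ (General Density Theorem, $X$ generic), $x$ lies in some $\Lambda_m$. By transitivity $\Lambda_m$ is connected; the key point to extract is that the hyperbolic periodic orbits of both indices $i$ and $j$ accumulating on $x$ can be taken inside $\Lambda_m$ (one uses that, for generic $X$, periodic orbits close to a point of $\Lambda_m$ are homoclinically related to, hence contained in, the same chain/transitive piece — this is where one invokes that the $\Lambda_k$ are the transitive pieces and that a star flow has a dominated splitting on $Cl(Per(X))$ forcing local constancy of the index away from the identifications). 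Thus $\Lambda_m$ simultaneously contains periodic orbits of two different indices.

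The next step is to produce, by a $C^1$-small perturbation, a \emph{quasi-transversal} or genuine heteroclinic cycle between two periodic orbits $O_p$ of index $i$ and $O_q$ of index $j$ sitting in $\Lambda_m$. Because $\Lambda_m$ is transitive, a dense orbit visits neighborhoods of both $O_p$ and $O_q$; applying Hayashi's connecting lemma (as used repeatedly in the cited works \cite{h}, \cite{mp1}, \cite{gwz}) to this dense orbit one gets, after an arbitrarily $C^1$-small perturbation $Y$, that $W^s_Y(O(q(Y)))\pitchfork W^u_Y(O(p(Y)))\neq\emptyset$ and, symmetrically, $W^s_Y(O(p(Y)))\pitchfork W^u_Y(O(q(Y)))\neq\emptyset$ — here the continuations are defined since periodic orbits persist under small perturbations and $X$ is star, so they stay hyperbolic. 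Invoking Lemma \ref{l1} (twice, with the roles of $p_0,q_0$ exchanged), and using that $X\in\SR$ (genuineness: intersect the residual set of Lemma \ref{l1} with the residual set where $X$ is Kupka--Smale and satisfies the General Density Theorem), we conclude that already for $X$ itself one has both
$$W^s_X(O(q_0))\pitchfork W^u_X(O(p_0))\neq\emptyset \quad\text{and}\quad W^s_X(O(p_0))\pitchfork W^u_X(O(q_0))\neq\emptyset.$$

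Finally one derives the contradiction with the star property. A transverse heteroclinic cycle between two hyperbolic periodic orbits of \emph{different} indices is the classical configuration whose unfolding produces a non-hyperbolic periodic orbit: by the Birkhoff--Smale / homoclinic-bifurcation theory for such cycles (again \cite{sstc}, as already used in the proof of Lemma \ref{43}), an arbitrarily $C^1$-small perturbation of $X$ creates a periodic orbit with an eigenvalue on the unit circle (one has a robust transverse connection in one direction and, going around the cycle, the index mismatch forces a saddle-node or flip along the other connection). This contradicts $X$ being a star flow. I expect the main obstacle to be the \textbf{first step}: justifying rigorously that the two index-$i$ and index-$j$ periodic orbits accumulating on $x$ may be chosen \emph{inside the same transitive piece} $\Lambda_m$ and homoclinically related within it, so that the connecting lemma can be applied to a single dense orbit — this requires care about how the spectral decomposition interacts with homoclinic classes of a generic star flow, and is the place where the hypothesis "spectral decomposition" (rather than merely $\Omega(X)$) is genuinely used. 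The perturbation steps (connecting lemma, cycle unfolding) are then standard and parallel the arguments in \cite{mp1} and \cite{gwz}.
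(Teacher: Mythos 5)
There is a genuine gap in your second step. With $i<j$, the ``symmetric'' intersection you want, $W^s_Y(O(p(Y)))\pitchfork W^u_Y(O(q(Y)))\neq\emptyset$, is impossible: $\di(W^s(O(p)))+\di(W^u(O(q)))=(i+1)+(n-j)=n+1-(j-i)\leq n$, while any nonempty intersection of these invariant manifolds contains a whole regular orbit and hence has dimension at least $1$; transversality would force dimension $1-(j-i)\leq 0$. So the connection from $O(q)$ to $O(p)$ is necessarily non-transverse, it cannot be made robust, and Lemma \ref{l1} (whose whole point is the openness of transverse intersections) cannot be applied ``twice with the roles of $p_0,q_0$ exchanged.'' For the same reason your final contradiction, which unfolds a ``transverse heteroclinic cycle'' to produce a non-hyperbolic periodic orbit, is built on a configuration that does not exist. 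The paper's proof only robustifies the fat direction: the connecting lemma plus a transversality perturbation give $W^s_Y(O(q(Y)))\pitchfork W^u_Y(O(p(Y)))\neq\emptyset$ (here $j+1+n-i>n$), Lemma \ref{l1} transfers this single transverse intersection to $X$ itself, and then the connecting lemma is applied a \emph{second} time to a nearby vector field $Y$ to create the other, non-transverse, connection, yielding a heterodimensional cycle for $Y$; the contradiction is then with the non-existence of such cycles for star flows (Theorem 4.1 of \cite{gw}), not with a cycle-unfolding argument for $X$.

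By contrast, the step you flag as the ``main obstacle'' is actually the easy part and needs none of the machinery you invoke (homoclinic relations, dominated splittings on $Cl(Per(X))$): each periodic orbit is a connected subset of $\Omega(X)$, and the spectral decomposition writes $\Omega(X)$ as a finite disjoint union of compact transitive sets, which are therefore at positive pairwise distance; hence every periodic orbit lies entirely in one piece, and orbits passing sufficiently close to $x\in\Lambda$ lie in the same piece $\Lambda$. With that correction and with the second connection handled as in the paper (non-transverse, produced on a perturbation, contradicting \cite{gw}), your outline becomes the paper's argument.
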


\begin{proof}
Let $\SR$ be the residual subset in Lemma \ref{l1}.
Suppose that $X\in \SR$ has spectral decomposition
but has no points accumulated by hyperbolic periodic orbits of different Morse indices.
Then, there exists $i\neq j$ such that
$Cl(Per_i(X))\cap Cl(Per_j(X))\neq \emptyset$. Without loss of generality we can assume $i<j$.
Take $x\in Cl(Per_i(X))\cap Cl(Per_j(X))$
so there are periodic orbits $O(p_0)$ (of index $i$) and $O(q_0)$ (of index $j$) arbitrarily close to $x$.
Clearly $x\in\Omega(X)$ and so there is a basic set $\Lambda$ in the spectral
decomposition of $X$ such that $x\in \Lambda$.
As the basic sets in the spectral decomposition are disjoint and
the orbits $O(p_0)$, $O(q_0)$ are close to $x$ (and belong to $\Omega(X)$)
we conclude that $O(p_0)\cup O(q_0)\subset \Lambda$.

Since $\Lambda$ is transitive, the connecting lemma \cite{h} implies that there exists $Y$
arbitrarily close to $X$ such that
$W^s_Y(O(q(Y))) \cap W^u_Y(O(p(Y)))\neq\emptyset$.
On the other hand, $j-i>0$ since $j>i$.
Moreover, $\di (W^s_Y(O(q(Y))))=j+1$ and $\di (W^u_Y(O(p(Y))))=n-i$
since $ind(O(q(y)))=j$ and $ind(O(p(Y)))=i$ (resp.).
Then, $\di (W^s_Y(O(q(Y))))+\di (W^u_Y(O(p(Y))))=j+1+n-i>n$ and so
with another perturbation we can assume that the above intersection is transversal.
Since $X\in \SR$ we conclude that
$$W^s_X(O(q_0))\pitchfork W^u_X(O(p_0))\neq \emptyset.$$
Now, using the connecting lemma again, there exists $Y$ close to $X$ with a heterodimensional cycle.
But this contradicts the non-existence of heteroclinic cycles for star flows
(c.f. Theorem 4.1 in \cite{gw}). The proof follows.
\end{proof}

\begin{proof}[Proof of Theorem B]
Apply Theorem A and Lemma \ref{l3}.
\end{proof}

\end{document}